\DocumentMetadata{uncompress}
\documentclass[11pt]{amsart}
\DeclareUnicodeCharacter{00A0}{ }
\DeclareUnicodeCharacter{3000}{ } 

\usepackage{amsfonts,graphics,amsmath,amsthm,amscd,amssymb,latexsym,multicol,mathrsfs}
\usepackage{epsfig,url}
\usepackage{flafter}
\usepackage{fancyhdr}
\usepackage[hyperindex,breaklinks]{hyperref}
\usepackage[dvipsnames]{xcolor}
\hypersetup{
	colorlinks=true,
	linkcolor=black,
	citecolor=black,
	filecolor=cyan,
	urlcolor=purple
}

\usepackage{graphicx}
\usepackage{float}
\usepackage{bm}
\usepackage{enumitem}
\usepackage{multirow}
\usepackage{scalerel,stackengine}
\usepackage{stmaryrd}
\usepackage{datetime}
\usepackage[title]{appendix}
\usepackage{tabulary}
\usepackage{booktabs}
\usepackage{tikz}
\usetikzlibrary{positioning}
\usepackage{verbatim}
\usepackage{mathtools}
\usepackage{braket}


\addtolength{\oddsidemargin}{-0.5in}
\addtolength{\evensidemargin}{-0.5in}
\addtolength{\textwidth}{1in}

\addtolength{\topmargin}{-0.4in}
\addtolength{\textheight}{0.7in}


\usepackage[all,cmtip]{xy}

\DeclareMathOperator{\Div}{Div}

\DeclareMathOperator{\red}{red}

\DeclareMathOperator{\Spec}{Spec}
\DeclareMathOperator{\Supp}{Supp}

\DeclareMathOperator{\fs}{fs}


\numberwithin{equation}{subsection}
\numberwithin{footnote}{subsection}

\newtheorem{cor}[subsection]{Corollary}
\newtheorem{lem}[subsection]{Lemma}
\newtheorem{prop}[subsection]{Proposition}
\newtheorem{thm}[subsection]{Theorem}

\newtheorem{quest}[subsection]{Question}

{
 \theoremstyle{definition}

 \newtheorem{defn-lem}[subsection]{Definition-Lemma}

}

\newcommand{\N}{\mathbb N}
\newcommand{\PP}{\mathbb P}

\newcommand{\Q}{\mathbb Q}
\newcommand{\R}{\mathbb R}
\newcommand{\Z}{\mathbb Z}

\newcommand{\bir}{\dashrightarrow}
\newcommand{\rddown}[1]{\left\lfloor{#1}\right\rfloor} 

\newcommand{\mni}{\medskip\noindent}
\newcommand{\wt}{\widetilde}
\newcommand{\wh}{\widehat}
\newcommand{\abs}[1]{\left\vert#1\right\vert}
\newcommand{\centre}{\operatorname{centre}}
\newcommand{\irr}{\operatorname{irr}}
\newcommand{\gon}{\operatorname{gon}}


\newcommand{\mbb}{\mathbb}

\newcommand{\mc}{\mathcal}
\newcommand{\mf}{\mathfrak}


\title{\large I\MakeLowercase{rrationality of degenerations of} F\MakeLowercase{ano varieties}}
\thanks{2020 MSC:
14B05, 
14D06, 
14E30, 
14J45, 
14M20, 
14M25. 
}
\author{\large C\MakeLowercase{aucher} B\MakeLowercase{irkar} \MakeLowercase{and} \large S\MakeLowercase{antai} Q\MakeLowercase{u}}
\date{\today}

\begin{document}

\begin{abstract}
    In this paper we investigate the degrees of irrationality of degenerations of $\epsilon$-lc Fano varieties
    of arbitrary dimensions.
    We show that given a generically $\epsilon$-lc klt Fano fibration 
    $X\to Z$ of dimension $d$ over a smooth curve $Z$
    such that $(X, t F)$ is lc for a positive real number $t$, where $F$
    is the reduction of an irreducible central fibre of $X$ over a closed point $z\in Z$, then $F$
    admits a rational dominant map $\pi\colon F\bir C$ 
    to a smooth projective variety $C$ with bounded degree of irrationality
    depending only on $d, \epsilon, t$
    such that the general fibres of $\pi$ are irreducible and rational.  
    This proves the generically bounded case of 
    a conjecture proposed by the first author and Loginov
    for log Fano fibrations of dimensions greater than three.
    One of the key ingredients in our proof is to modify the generically $\epsilon$-lc klt Fano fibration $X\to Z$
    to a toroidal morphism of toroidal embeddings with bounded general fibres.
\end{abstract}

\maketitle

\tableofcontents

\addtocontents{toc}{\protect\setcounter{tocdepth}{1}}


\section{\bf Introduction}

We work over an algebraically closed field $\mbb K$ of characteristic zero unless stated otherwise.

\mni
{\textbf{\sffamily{Bounded irrationality of divisors on Fano fibrations.}}}
Given a family of varieties parameterised by a smooth curve, 
degenerations are the limit varieties of this family of varieties.
Many geometric properties of varieties are preserved by passing to degenerations.
For example, irreducible degenerations of uniruled varieties (respectively, rationally chain connected varieties) 
are also uniruled (respectively, rationally chain connected); see \cite[Theorem~IV.1.8, Theorem~IV.3.5.3]{Kol-rc}.
However, it is well-known that rational varieties can degenerate to non-rational varieties
even for rational del Pezzo surfaces; for instance, 
a smooth cubic surface can degenerate to a cone over an elliptic curve which is a
non-rational and singular del Pezzo surface.
This phenomenon is closely related to the fact that the degeneration has 
log canonical (lc) but not Kawamata log terminal (klt) singularities.

In the paper \cite{BirkarLoginov}, the first named author and Loginov study 
the non-rationality property of degenerations of
klt del Pezzo surfaces and prove how far the irreducible components of the degenerations
can be from being rational.  More precisely, 
the boundedness of irrationality of degenerations
is proved in \cite{BirkarLoginov} as follows.
In particular, by \cite[Theorem~1.1]{BirkarLoginov}, the irrationality of degenerations of klt del Pezzo surfaces
is bounded in a certain way even though the degenerations do not belong to a bounded family of surfaces,
that is, the collection of degenerations can not be parameterised by a finite type scheme; see \S \ref{b-bnd-couples}.
Recall that a contraction $f\colon X\to Z$ of normal varieties is called a
\emph{Fano fibration} (respectively, \emph{klt Fano fibration})
if $X$ has lc singularities (respectively, klt singularities)
and $-K_X$ is ample over $Z$; see \S \ref{defn-fano-fibr}.

\begin{thm}[\protect{\cite[Theorem~1.1]{BirkarLoginov}}]\label{Birkar-Loginov}
    Fix a positive real number $t$.  Assume that $f\colon X\to Z$ is a klt Fano fibration
    where $\dim X = 3$ and $\dim Z =1$.  Assume that $F$ is the reduction of an irreducible fibre 
    and that $(X, t F)$ is lc.  Then:
    \begin{itemize}
    	\item [\emph{(i)}] $F$ is birational to $\PP^1\times C$, where $C$ is a smooth projective curve 
    	                   with gonality $\gon (C)$ bounded depending only on $t$,
    	\item [\emph{(ii)}] if $t>\frac{1}{2}$, then the genus $g(C)$ is bounded, and
    	\item [\emph{(iii)}] if $t=1$, then the genus $g(C)\le 1$.
    \end{itemize}
\end{thm}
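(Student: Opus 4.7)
First I would apply adjunction to $F\subset X$: since $f^{*}z=mF$, the restriction $F|_F$ is torsion in $\Pic(F)$, and the different produces an effective $\Q$-divisor $B_F$ with $K_F+B_F\sim_{\Q}(K_X+F)|_F\equiv K_X|_F$. As $-K_X$ is $f$-ample, $-(K_F+B_F)$ is ample, so $(F,B_F)$ is a log Fano surface; the assumption that $(X,tF)$ is lc bounds the coefficients of $B_F$ and the log discrepancies of $(F,B_F)$ in terms of $t$.

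If $(F,B_F)$ were klt, then by two-dimensional boundedness of log Fano surfaces $F$ would be bounded and in particular rational, and one could take $C$ a point. The interesting case is when $(F,B_F)$ has a genuine non-klt centre (cf.\ cones over elliptic curves). I would take a dlt modification $\sigma\colon F'\to F$; the reduced non-klt divisor $N\subset F'$ is a nonempty snc curve, and together with $-(K_{F'}+\Delta')$ big and nef it should force $F'$, after possibly running a suitable MMP, to carry a Mori fibre structure $\pi\colon F'\to C$ with one-dimensional base, i.e.\ a conic bundle. Producing this conic bundle from the non-klt locus is the heart of the argument and the main technical obstacle, essentially requiring a classification of lc log del Pezzo surfaces with nontrivial non-klt centre.

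The bounds on $C$ then follow from two adjunctions. Any component $N_0$ of $N$ satisfies $\deg(K_{N_0}+B_{N_0})=(K_{F'}+\Delta')\cdot N_0\le 0$ (by nefness of $-(K_{F'}+\Delta')$), hence $g(N_0)\le 1$ and $\gon(N_0)\le 2$. Since $\pi$ is a $(K_{F'}+\Delta')$-Mori fibre space, a general fibre $F_{\pi}\cong\PP^1$ satisfies $N_0\cdot F_{\pi}\le\Delta'\cdot F_{\pi}<-K_{F'}\cdot F_{\pi}=2$, so if $N_0$ dominates $C$ then $N_0\to C$ has degree $1$, i.e.\ $C\cong N_0$ and $\gon(C)\le 2$, proving (i). Applying the canonical bundle formula $K_{F'}+\Delta'\sim_{\Q}\pi^*(K_C+B_C+M_C)$, the discriminant $B_C$ has coefficients controlled by $t$ and $M_C$ is nef, so the nef positivity of $-(K_{F'}+\Delta')$ gives a bound on $\deg(-K_C)$ in terms of $t$, yielding (ii) when $t>1/2$. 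For (iii), when $t=1$ the pair $(X,F)$ is itself lc, and a bounded complement of $(F,B_F)$ produces a log Calabi--Yau structure $K_C+B_C^{+}+M_C\sim_{\Q}0$ with $B_C^{+}\ge 0$ and $M_C\ge 0$ nef, forcing $\deg(-K_C)\ge 0$, i.e.\ $g(C)\le 1$.
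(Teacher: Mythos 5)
This statement appears in the paper only as a citation to \cite{BirkarLoginov} Theorem~1.1; the paper does not reprove it, and its own arguments (Theorem~\ref{bnd-irra}, Corollary~\ref{genus-dim-3}) address a strengthened hypothesis ($\epsilon$-lc over the generic point of $Z$) by an entirely different toroidal method rather than by surface adjunction. So there is no in-paper proof to compare against, and I will assess your proposal on its own terms.

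Your argument proves too much, which signals an error. For \emph{every} component $N_0$ of the reduced non-klt divisor $N$ you deduce $g(N_0)\le 1$ from nefness of $-(K_{F'}+\Delta')$ plus adjunction, and when $N_0$ dominates the base $C$ of the conic bundle you conclude $N_0\to C$ has degree one and $C\cong N_0$. Together these would give $g(C)\le 1$ for \emph{all} $t>0$; but the theorem asserts a genus bound only when $t>1/2$, and Example~2.3 of \cite{BirkarLoginov} (quoted in the introduction of this paper) exhibits $t\le 1/2$ with $g(C)$ arbitrarily large. The fault is in the very first step: you take the different $B_F$ from adjunction for $(X,F)$ along $F$ and assert that $(X,tF)$ being lc controls the coefficients of $B_F$ and the log discrepancies of $(F,B_F)$ in terms of $t$. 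That assertion is unjustified and false in the relevant range: lc-ness of $(X,tF)$ for $t<1$ says nothing about $(X,F)$; the coefficients of $B_F$ can exceed $1$, $(F,B_F)$ need not be lc, and then the dlt modification $(F',\Delta')$, the nonemptiness of a ``non-klt snc curve,'' and the inequality $\deg(K_{N_0}+B_{N_0})\le 0$ all fail to be available. Your scheme is consistent only in case (iii), where $t=1$ and $(X,F)$ is genuinely lc. Beyond this, you never treat the case in which no non-klt component dominates $C$, which is exactly the case producing large genus, so (i) is not established even granting the adjunction setup; and the existence of the conic bundle $\pi\colon F'\to C$ is itself conceded to be unproved (``should force \ldots requiring a classification''), so the central structural step is hand-waved rather than argued.
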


Note that the genus $g(C)$ can be arbitrarily large if $t\le \frac{1}{2}$; see \cite[Example~2.3]{BirkarLoginov}.
Moreover, Theorem~\ref{Birkar-Loginov} follows from a more general result for 
Fano type log Calabi-Yau fibrations, where the base $Z$ of the fibration can have
dimension $\ge 1$; see \cite[Theorem~1.3]{BirkarLoginov}.
Recall that the \emph{gonality} $\gon (C)$ of a smooth curve $C$ is the smallest degree of dominant morphisms
of curves $C\to \PP^1$.  In higher dimensions, the \emph{degree of irrationality} $\irr (D)$ of a variety $D$
is the least possible degree of dominant rational maps $D\dashrightarrow \PP^{\dim D}$; see \S \ref{defn-irr}.
The first named author and Loginov ask in \cite{BirkarLoginov}
whether a similar result to Theorem~\ref{Birkar-Loginov} 
for klt Fano fibrations of higher dimensions should also hold.

\begin{quest}[cf. \protect{\cite[Question~1.4]{BirkarLoginov}}]\label{quest-bnd-irra}
	Fix a positive real number $t > 0$ and a natural number $d$. Suppose that
    $f\colon X\to Z$ is a klt Fano fibration over a smooth curve $Z$, where $\dim X = d$. Assume that $D$
    is the reduction of an irreducible fibre of $f$ such that $(X, tD)$ is lc. Is it true that there is a
    rational map $D\dashrightarrow C$, where the general fibres are rationally connected and $C$ is a smooth
    projective variety with bounded degree of irrationality?
\end{quest}

In this paper, we apply a very different approach from \cite{BirkarLoginov}
via toroidal geometry to prove the boundedness of degrees of irrationality 
for degenerations of klt Fano fibrations of arbitrary dimensions
with $\epsilon$-lc general fibres; cf. \cite[\S 5, \S 6]{BirkarLoginov}.

\begin{thm}\label{bnd-irra}
    Fix positive real numbers $\epsilon, t>0$ and a natural number $d$.
    Assume that $f\colon X\to Z$ is a klt Fano fibration with $\dim X =d$ such that
    \begin{itemize}
    	\item [\emph{(1)}] $Z$ is a smooth curve,
    	\item [\emph{(2)}] $X$ is $\epsilon$-lc over the generic point of $Z$, and
    	\item [\emph{(3)}] $F$ is the reduction of an irreducible closed fibre of $f$ and $(X, t F)$ is lc.
    \end{itemize}
    Then there is a dominant rational map $F \dashrightarrow C$ whose general fibres are irreducible and rational,
    and $C$ is a bounded smooth projective variety
    depending only on $d, \epsilon, t$,
    hence with bounded degree of irrationality.
\end{thm}

Note that the general fibres of $F\dashrightarrow C$ in Theorem~\ref{bnd-irra} are \emph{rational},
which is a much stronger geometric property than the rational connectivity 
predicted in Question~\ref{quest-bnd-irra}.  Example~2.1 of \cite{BirkarLoginov} 
shows that Theorem~\ref{bnd-irra} does not hold without assuming that $(X, tF)$ is lc.
A similar result can be formulated when the fibre containing $F$ is not irreducible 
by using Fano type log Calabi-Yau fibrations; cf. \cite[Theorem~1.3, Question~1.4]{BirkarLoginov}.

As a corollary of Theorem~\ref{bnd-irra}, we show that every irreducible component of fibres of $\epsilon$-lc 
Fano fibrations admits the structure as in Theorem~\ref{bnd-irra}.
Note that in this case, we do not prescribe a positive real number $t>0$ such that $(X, tF)$ is lc.

\begin{thm}\label{bnd-irra-cor}
    Let $\epsilon$ be a positive real number, and let $d$ be a natural number. 
    Let $f\colon X\to Z$ be a Fano fibration with $\dim X = d$, where $Z$ is a smooth curve,
    and $X$ is $\epsilon$-lc.  Let $F$ be an irreducible component of a closed fibre of $f$.
    Then there is a dominant rational map $F \dashrightarrow C$ whose general fibres are irreducible and rational,
    and $C$ is a bounded smooth projective variety depending only on $d, \epsilon$,
    hence with bounded degree of irrationality.
\end{thm}


\mni
{\textbf{\sffamily{Relative toroidalisation and sketch of the proof.}}}
One of the key ingredients in our proof of
Theorem~\ref{bnd-irra} is that we can modify the family $f\colon X\to Z$
to a generically bounded toroidal morphism of strict toroidal embeddings.
A general fibre of $X\to Z$ in Theorem~\ref{bnd-irra} is an $\epsilon$-lc Fano variety
which is bounded by \cite[Theorem~1.1]{B-BAB}.
Thus, there is a birational map $\phi\colon X\dashrightarrow Y$ over $Z$
such that every fibre of $g\colon Y\to Z$ is bounded; indeed, $Y\to Z$
is relatively bounded (see \S \ref{r-bnd-def}).
Moreover, $\phi$ can be chosen so that it does not contract any curve over the generic point of $Z$ as
general fibres of $f$ are already bounded.
In \S \ref{pf-bnd-irra}, we show that there exists a toroidal morphism $f'$ fitting into the commutative diagram
\[\xymatrix{
(U_{Y'}\subset Y')\ar[d]_{f'}\ar[r]^-{m_X} & Y\ar[d]^g & X\ar[d]^f\ar@{-->}[l]_-{\phi} \\
(U_{Z'}\subset Z')\ar[r]_-{m_Z} & Z\ar@{=}[r] & Z
}\]
where the left vertical arrow $f'$ is a toroidal morphism of strict toroidal embeddings (see \S \ref{toroidal-couples-defn}
and \S \ref{toroidal-morphisms}),
$m_X$ and $m_Z$ are projective birational morphisms, and the general fibres of $f'$ are bounded.

Note that the toroidal modification exists
for any projective surjective morphism of varieties
by \cite{ADK13, weak:s-stable:reduction}; 
see Theorem~\ref{ak00-thm}.  However, here we require a stronger condition 
on the boundedness of general fibres of $f'$,
which will be deduced from Theorem~\ref{functorial-toroidalization}.
Similar constructions are applied in \cite{birkar2025toroidaltoricmodelsfibrations},
where $m_X, m_Z$ may not be birational.
The techniques related to toroidal embeddings are also used in \cite{jiao25}.

Denote by $C'$ the centre of $F$ on $Y'$.  
By decreasing $t$ if necessary, we can assume that $t$ is a positive rational number.
As will be shown in \S \ref{pf-bnd-irra}, 
by taking an $n$-complement of $K_X + tF$
for some $n\in \N$ depending only on $d, t$ (see \S \ref{complements}),
the toroidal modification $f'$ can be taken so that
these additional properties are also satisfied:
\begin{itemize}
	\item [(i)] the support of the fibre of $f'$ over $z\in Z\cong Z'$ is contained in the 
	            toroidal boundary $D' \coloneqq Y'\setminus U_{Y'}$, in particular, $C'$ is contained in $D'$, and 
	\item [(ii)] the log discrepancy $a(F, Y', D')$ is equal to zero.
\end{itemize}
Denote by $S$ the reduction of the fibre of $f'$ over $z\in Z\cong Z'$.
Then $C'$ is an lc centre of $(Y', D')$ contained in $S$.  
By taking a sufficiently ample$/Z'$ divisor on $Y'$, we are in the situation to apply
the results in \cite{Birkar-moduli-polarized, HMX13-auto-groups, HMX14} 
to show that the lc centre $C'$ is birationally bounded; cf. Lemma~\ref{b-bnd} and \S \ref{pf-bnd-irra}.

Let $C$ be a bounded nonsingular projective variety that is birational to $C'$. 
Then there is a rational map $\pi\colon F\dashrightarrow C$.
Now $C$ is a smooth projective variety with bounded degree of irrationality since
it is bounded; cf. Lemma~\ref{irr-bnd-above}.
Moreover, as $C'$ is an lc centre of the strict toroidal couple $(Y', D')$, a general fibre of $\pi$ 
is irreducible and rational; see Proposition~\ref{rational-fibres}.
This concludes Theorem~\ref{bnd-irra}.

The following result compares Theorem~\ref{bnd-irra} and \cite[Theorem~1.1]{BirkarLoginov} when $\dim X = 3$.

\begin{cor}[cf. \protect{\cite[Theorem~1.1]{BirkarLoginov}}]\label{genus-dim-3}
	Fix positive real numbers $\epsilon, t$.  Let $f\colon X\to Z$ be a klt Fano fibration,
    where $\dim X = 3$ and $\dim Z =1$ such that
    $X$ is $\epsilon$-lc over the generic point of $Z$.  Let $F$ be the reduction of an irreducible closed fibre of $f$
    such that $(X, t F)$ is lc.
    Then $F$ is birational to $\PP^1\times E$, where $E$ is a smooth projective curve with gonality $\gon (E)$ 
    and genus $g(E)$ bounded depending only on $\epsilon$ and $t$.
\end{cor}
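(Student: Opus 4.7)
\textbf{Proof plan for Corollary~\ref{genus-dim-3}.}

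The plan is to specialize Theorem~\ref{bnd-irra} to dimension $d=3$ and then upgrade the resulting rational fibration to a birational product structure by means of Tsen's theorem. Applying Theorem~\ref{bnd-irra} to $f\colon X\to Z$, we obtain a dominant rational map $\pi\colon F\dashrightarrow C$ whose general fibres are irreducible and rational, where $C$ is a bounded smooth projective variety with degree of irrationality bounded in terms of $\epsilon$ and $t$. Since $\dim F=\dim X-\dim Z=2$, the possible dimensions of $C$ are $0$, $1$, and $2$, and I would split the argument accordingly.

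If $\dim C=0$, then $F$ itself is a rational surface, hence birational to $\PP^1\times \PP^1$, and one takes $E=\PP^1$. If $\dim C=1$, set $E:=C$. The general fibre of $\pi$ is a one-dimensional irreducible rational variety, i.e.\ isomorphic to $\PP^1$, so after replacing $F$ by a smooth projective birational model and resolving the indeterminacy of $\pi$ the generic fibre of $\pi$ is a smooth conic over the function field $K(E)$. Since $K(E)$ is a $C_1$ field by Tsen's theorem, every smooth conic over $K(E)$ is isomorphic to $\PP^1_{K(E)}$; hence $F$ is birational to $\PP^1\times E$. The boundedness of $C=E$ gives a bound on $g(E)$, while the bounded degree of irrationality of $C$ is precisely a bound on $\gon(E)$. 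Finally, if $\dim C=2$, then $\pi$ is birational and $F$ is birational to the bounded surface $C$; since the general fibre of $f$ is a klt Fano variety, hence uniruled, and since irreducible degenerations of uniruled varieties are uniruled (\cite{Kol-rc} IV.1.8, as cited in the introduction), $F$ is itself uniruled. By the Enriques--Kodaira classification, $F$ is then birational to $\PP^1\times E$ for a smooth projective curve $E$ (with $E=\PP^1$ if $F$ is rational), and $g(E)=q(F)=q(C)$ is controlled by the boundedness of $C$, which in turn bounds $\gon(E)$ via Brill--Noether.

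The essential step is the case $\dim C=1$, where the rational fibration produced by Theorem~\ref{bnd-irra} must be promoted to an actual birational product $F\sim\PP^1\times E$ via Tsen's theorem; the main technical point to be careful about is ensuring that the generic fibre of $\pi$ is a regular conic, which can be arranged by passing to a smooth projective birational model of $F$ and resolving the indeterminacy of $\pi$. The remaining cases are comparatively routine: $\dim C=0$ is immediate, while $\dim C=2$ reduces to classical surface theory once Kollár's theorem is invoked to guarantee that $F$ remains uniruled.
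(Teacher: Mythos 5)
Your proposal is correct and follows the same overall structure as the paper's proof: apply Theorem~\ref{bnd-irra}, then split on $\dim C\in\{0,1,2\}$. The differences are in which tools are used to pin down $E$ in each case. The paper observes once and for all that $F$ is ruled, because it is a degeneration of a rational del Pezzo surface (invoking \cite{Kol-rc} Theorem IV.1.6), so $F\sim\PP^1\times E$ outright, and then identifies $E$ case by case. You instead produce the product structure separately in each case: when $\dim C=1$ you invoke Tsen's theorem ($K(C)$ is $C_1$) to convert the genus-$0$ generic fibre of $\pi$ into $\PP^1_{K(C)}$, whereas the paper passes silently over this point, so your argument is actually more explicit here; when $\dim C=2$ you pass through uniruledness of degenerations of uniruled varieties and the Enriques--Kodaira classification, which is a fine substitute for the paper's single citation. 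For bounding $\gon(E)$ in the $\dim C=2$ case, the paper goes via bounded degree of irrationality of the birationally bounded $F$ (citing \cite{gon-surface} Remark 2.1), while you bound $g(E)$ first via the irregularity of the bounded smooth model and then use Brill--Noether; both are valid, and your Brill--Noether route is arguably more elementary since it only uses the genus bound you already have. In short, the structure matches the paper and each of your substitutions is sound; none changes the essential shape of the argument.
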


\begin{proof}
	From the discussion as above, there is a rational map $\pi\colon F\dashrightarrow C$ such that
	\begin{itemize}
		\item [(i)] $C$ is a bounded smooth projective variety, and
		\item [(ii)] a general fibre of $\pi$ is irreducible and rational.
	\end{itemize}
	Then the dimension of $C$ gives several possibilities for the structure of $F$:
    \begin{itemize}
	    \item [(1)] if $\dim C = 0$, then $F$ is a rational surface,
	    \item [(2)] if $\dim C = 1$, then $C$ has bounded gonality $\gon (C)$ and genus $g(C)$, 
	            and a general fibre of $\pi$ is isomorphic to $\PP^1$,
	            hence $F$ is birational to $\PP^1\times C$, and
	    \item [(3)] if $\dim C = 2$, then $F$ is birationally bounded.
    \end{itemize}
    Note that $F$ must be ruled as it is a degeneration of rational surfaces
    (see \cite[Theorem~IV.1.6]{Kol-rc}), so $F$ is birational to $\PP^1\times E$
    for some smooth projective curve $E$.  In case (1), we can take $E = \PP^1$.
    In case (2), $E$ is isomorphic to $C$ which has bounded gonality and genus.
    Now we show that $E$ also has bounded gonality and genus in case (3).
    As $F$ is birationally bounded in this case, $F$ has bounded degree of irrationality by Lemma~\ref{irr-bnd-above},
    hence $E$ also has bounded gonality which is bounded from above by the degree of irrationality of $F$;
    see \cite[Remark~2.1]{gon-surface}.  
    On the other hand, the irregularity of the surface $\PP^1\times E$ is equal to the genus $g(E)$;
    see \cite[Exercise~III.8.4]{Hart}.
    Since $\PP^1\times E$ is birationally bounded, there are only finitely many 
    possible values for the irregularity of $\PP^1\times E$, so
    $g(E)$ is also bounded from above.
\end{proof}


\mni
{\textbf{\sffamily{Boundedness of $\epsilon$-lc Fano varieties.}}}
Keeping the same notation as in Theorem~\ref{bnd-irra} and assuming that $\dim X = 3$,
we know that $F$ is birational to $\PP^1\times E$ for some smooth projective curve $E$.
Notice that the assumptions in Theorem~\ref{bnd-irra} 
for $\dim X = 3$ are stronger than those in Theorem~\ref{Birkar-Loginov}
by requiring additionally that $X$ is $\epsilon$-lc over the generic point of $Z$,
that is, a general fibre of $X\to Z$ is an $\epsilon$-lc del Pezzo surface for the fixed $\epsilon > 0$.
It is well-known that $\epsilon$-lc del Pezzo surfaces are bounded for any fixed $\epsilon>0$
(cf. \cite{Alex-K2}), and the family of del Pezzo surfaces is not bounded for $\epsilon = 0$.
This leads to the phenomenon that both the gonality and genus of $E$ are always bounded
under the conditions of Theorem~\ref{bnd-irra}; see Corollary~\ref{genus-dim-3}.
However, dropping the boundedness condition, \cite[Example~2.3]{BirkarLoginov} illustrates
that the genus of the curve $E$ can be arbitrarily large.
A general fibre of the klt Fano fibration constructed in \cite[Example~2.3]{BirkarLoginov} is 
isomorphic to the weighted projective space $\PP(1,1,n)$ with $n\ge 3$.
The log discrepancy of $\PP(1, 1, n)$ is equal to $2/n$, hence
the family of del Pezzo surfaces $\PP (1, 1, n)$ with $n \ge 3$ is not bounded; cf. \cite[Example~1.2]{B-BAB}.
The boundedness condition on general fibres of the klt Fano fibration
is crucial in our proof of Theorem~\ref{bnd-irra} in higher dimensions, that is,  
$\epsilon$-lc Fano varieties of dimension $d$ form a bounded family
for any fixed positive real number $\epsilon > 0$ and natural number $d$;
see \cite[Theorem~1.1]{B-BAB}.
It is also evident from the discussion above that boundedness of complements
is another crucial ingredient of our proofs; see \cite{B-Fano}.


\mni
{\textbf{\sffamily{Acknowledgements.}}}
We thank Professor Dan Abramovich for answering our numerous questions about the results
in \cite{ATW20}.  We would like to thank Dingxin Zhang and Heer Zhao 
for very helpful discussions about logarithmic geometry.
We would also like to thank Mao Sheng for his helpful comments.
We thank Hexu Liu for reading carefully a draft version of this paper,
and for his many suggestions to improve the paper.
We thank the anonymous referees for many helpful comments to improve this paper.
The first author was supported by a grant from Tsinghua University and 
a grant of the National Program of Overseas High Level Talent.
The results and proofs of this paper were presented by the second author
in a talk at the online Tsinghua Algebraic Geometry Seminar in October 2022.


\section{\bf Preliminaries}\label{preliminaries}

By a \emph{scheme}, we mean a separated scheme of finite type over the fixed algebraically closed field 
$\mathbb{K}$ of characteristic zero.
A \emph{variety} is an irreducible, reduced, and quasi-projective scheme;
when emphasising the quasi-projectivity, we also call it a \emph{quasi-projective variety}.
For a scheme $X$, we denote by $X_{\text{red}}$ the maximal reduced closed subscheme
of $X$ and call it the \emph{reduction of $X$}. 

Let $f\colon X\to Z$ be a morphism between schemes, where $Z$ is irreducible.
If $X$ has a unique irreducible component $Y$ that dominates $Z$, we call
$Y$ the \emph{main component of $X$}.
If this is the case, we usually just say that \emph{$Y$ is the main component of $X$}
without mentioning that it is the unique irreducible component of $X$ dominating $Z$.

\subsection{Contractions}
 
A \emph{contraction} is a projective morphism of schemes $f\colon X\to Y$
such that $f_*\mc{O}_X = \mc{O}_Y$; $f$ is not necessarily birational.
In particular, $f$ has connected fibres.  Moreover, if $X$ is normal, then $Y$ is also normal.


\subsection{Divisors}

Let $X$ be a scheme.  By a \emph{divisor}, we mean a Weil divisor on $X$, that is,
a finite $\Z$-linear combination of codimension one irreducible and reduced closed subschemes of $X$.
By a \emph{$\Q$-divisor} (respectively, an \emph{$\R$-divisor}), we mean a finite linear combination
$\sum_i b_i B_i$, where every $B_i$ is a prime Weil divisor on $X$ and $b_i\in \Q$ 
(respectively, $b_i\in \R$). 
A $\Q$-divisor (respectively, an $\R$-divisor) is called \emph{$\Q$-Cartier}
(respectively, \emph{$\R$-Cartier})
if it is a $\Q$-linear (respectively, an $\R$-linear) combination of Cartier divisors.
Let $B_1$ and $B_2$ be two $\R$-divisors on $X$.  We say that $B_1\sim B_2$ (respectively, 
$B_1\sim_{\Q} B_2$, respectively, $B_1\sim_{\R} B_2$) if $B_1 - B_2$ is a $\Z$-linear 
(respectively, a $\Q$-linear, respectively, an $\R$-linear) combination of principal divisors.

Let $f\colon X\to Z$ be a morphism of schemes, and let $L$ and $M$ be $\R$-divisors
on $X$.  We say that \emph{$L\sim M$ over $Z$} 
(respectively, \emph{$L\sim_{\Q} M$ over $Z$}, respectively, \emph{$L\sim_{\R} M$ over $Z$})
if there is a Cartier (respectively, a $\Q$-Cartier, respectively, an $\R$-Cartier) divisor $N$ on $Z$ 
such that $L-M\sim f^*N$ (respectively, $L-M \sim_{\Q} f^*N$, respectively, $L-M \sim_{\R} f^*N$);
see \cite[\S 2.3]{B-Fano}.  In this case, we usually write
$L\sim M/Z$ (respectively, $L\sim_{\Q}M/Z$, respectively, $L\sim_{\R}M/Z$).

Let $f\colon X\to Z$ be a morphism of schemes and $D$ a nonzero $\R$-divisor on $X$.
We say that $D$ is \emph{vertical$/Z$} if $f(\Supp D)$ does not contain
any generic point of $Z$.
If $D$ does not have any vertical$/Z$ irreducible components,
we say that $D$ is \emph{horizontal$/Z$}.

For the \emph{volume} of a big $\R$-divisor, we refer the readers to \cite[\S 2.2.C]{positivity-rob-I}  
for the definition and more details.


\subsection{Singularities of pairs}

In this paper, we will use standard notions and results from minimal model program; see \cite{BCHM, km98}.
Here we collect some of the most fundamental definitions for clarification.
A \emph{pair} $(X, B)$ consists of a normal quasi-projective variety $X$
and an $\R$-divisor $B$ with coefficients in $[0, 1]$ such that $K_X + B$ is $\R$-Cartier;
in this case, we say that $B$ is a \emph{boundary}.
Let $W\to X$ be a log resolution of a pair $(X, B)$, and let 
$K_W + B_W$ be the pullback of $K_X + B$.
Denote by $\mu_D B_W$ the coefficient of $B_W$ at a prime divisor $D$ on $W$.
Then the \emph{log discrepancy} of $D$ with respect to $(X, B)$
is $1-\mu_D B_W$, and it is denoted by $a(D, X, B)$.
We say that $(X, B)$ is \emph{lc} (respectively, \emph{klt}, respectively, \emph{$\epsilon$-lc})
if $a(D, X, B)$ is $\ge 0$ (respectively, $>0$, respectively, $\ge \epsilon$) 
for every prime divisor $D$ on an arbitrary log resolution $W\to X$.
Note that $a(D, X, B)$ can also be defined in the same way when 
the coefficients of $B$ do not belong to $[0, 1]$ as long as $K_X + B$ is $\R$-Cartier.

Let $(X, B)$ be an lc pair.  An \emph{lc place} of $(X, B)$ is a prime divisor $D$ on 
some birational model of $X$ such that $a(D, X, B) = 0$.
An \emph{lc centre} is the closure of the image of an lc place on $X$. 

For \emph{dlt pairs}, we refer 
the readers to \cite[\S 2.3]{km98} for the basic definitions.
Let $(X, B)$ be an lc pair.  A \emph{$\Q$-factorial dlt model} of $(X, B)$ is 
a $\Q$-factorial dlt pair $(Y, B_Y)$
admitting a projective birational morphism $\pi\colon Y\to X$ such that
$K_Y + B_Y = \pi^*(K_X + B)$ and that every $\pi$-exceptional prime divisor is 
an irreducible component of $\rddown{B_Y}$; cf. \cite[\S 5.1]{BirkarLoginov}.
In particular, see \cite[Theorem~3.1]{KK10} 
for the existence of $\Q$-factorial dlt models.

For the basic definitions and
properties of \emph{generalised pairs}, we refer the readers to \cite[\S 2]{B-Fano}
and \cite[\S 4]{B-Zhang}.
For the definition and existence of \emph{$\Q$-factorial generalised dlt models},
see \cite[\S 2.13]{B-Fano}.

Singularities of pairs, such as lc singularities, can also be defined for 
demi-normal schemes, and the corresponding pairs with lc property are called \emph{slc pairs}.
We refer the readers to \cite[Chapter~5]{Kol_singularities_of_MMP} for more details about slc pairs.


\begin{lem}\label{log-discrep-unchanged}
	Let $X$ and $Y$ be normal varieties over a variety $Z$, 
	and let $\phi\colon Y\dashrightarrow X$ be a birational map over $Z$.
	Let $B$ be an $\R$-divisor on $X$ such that $K_X + B$ is $\R$-Cartier,
    and let $B_Y$ be the $\R$-divisor on $Y$ defined by $K_Y + B_Y = \phi^*(K_X + B)$.
	Let $F$ be a prime divisor over $X$.
	Assume that $K_X + B \sim_{\R} 0 /Z$.
	Then the log discrepancy $a(F, X, B)$ is equal to $a(F, Y, B_Y)$.
\end{lem}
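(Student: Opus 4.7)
The plan is to reduce to a common log resolution $W$ of $\phi$ and $(X,B)$, and to show that two natural divisors on $W$ coincide; equality of log discrepancies then follows immediately from the definition. First I would choose a smooth $W$ with proper birational morphisms $p\colon W\to X$ and $q\colon W\to Y$ resolving $\phi$, arranged so that $F$ appears as a prime divisor on $W$ (replacing $W$ by a further blowup if necessary), and fix canonical divisors on $X$, $Y$, $W$ compatibly. Define $B_W^X$ and $B_W^Y$ on $W$ by
\[K_W+B_W^X=p^*(K_X+B),\qquad K_W+B_W^Y=q^*(K_Y+B_Y).\]
Then $a(F,X,B)=1-\mu_F B_W^X$ and $a(F,Y,B_Y)=1-\mu_F B_W^Y$, so it suffices to show that the $\Q$-Cartier divisor
\[E:=p^*(K_X+B)-q^*(K_Y+B_Y)\]
vanishes identically on $W$.

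Denote by $\pi_X\colon X\to Z$, $\pi_Y\colon Y\to Z$, and $\pi_W:=\pi_X\circ p=\pi_Y\circ q\colon W\to Z$ the structure morphisms (here $\phi$ is implicitly compatible with the maps to $Z$). Interpreting $\phi^*(K_X+B)$ as the pullback $q_*p^*(K_X+B)$ through the resolution, the definition of $B_Y$ reads $K_Y+B_Y=q_*p^*(K_X+B)$, so by the projection formula $q_*q^*(K_Y+B_Y)=K_Y+B_Y$, and hence $q_*E=q_*p^*(K_X+B)-(K_Y+B_Y)=0$. Next, the hypothesis $K_X+B\sim_\Q 0/Z$ yields $K_X+B\sim_\Q\pi_X^*N$ for some $\Q$-Cartier divisor $N$ on $Z$. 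Pulling back to $W$ gives $p^*(K_X+B)\sim_\Q\pi_W^*N=q^*\pi_Y^*N$, and pushing forward by $q$ yields $K_Y+B_Y\sim_\Q\pi_Y^*N$, which in particular confirms that $K_Y+B_Y$ is $\Q$-Cartier. Pulling back once more, $q^*(K_Y+B_Y)\sim_\Q\pi_W^*N\sim_\Q p^*(K_X+B)$, so $E\sim_\Q 0$ on $W$; in particular both $E$ and $-E$ are $q$-nef.

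The remaining step is an application of the negativity lemma to $E$ and to $-E$ over the proper birational morphism $q$: since $-E$ is $q$-nef and $q_*E=0$, we obtain $E\geq 0$, and symmetrically $E\leq 0$, so $E=0$. Thus $B_W^X=B_W^Y$ on $W$, $\mu_F B_W^X=\mu_F B_W^Y$, and $a(F,X,B)=a(F,Y,B_Y)$. I expect the main technical point to be the passage from $q_*E=0$ to $E=0$, which is exactly where the hypothesis $K_X+B\sim_\Q 0/Z$ is essential: without it, $E$ is still supported on divisors that are exceptional over $X$ or $Y$, but it need not be $\Q$-trivial over $Z$, so the negativity lemma cannot be invoked and the log discrepancies of $(X,B)$ and $(Y,B_Y)$ can genuinely differ on certain exceptional valuations.
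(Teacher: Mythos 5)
Your proof is correct and takes exactly the approach the paper indicates (which leaves the argument to the reader): take a common log resolution, pull both pairs back, and show the two crepant pullbacks coincide, with the negativity lemma applied to $E$ and to $-E$ to pass from $E\sim_\Q 0$ and $q_*E=0$ to $E=0$. The one small point to tidy is the ordering: first establish $K_Y+B_Y\sim_\Q\pi_Y^*N$ (and hence that $K_Y+B_Y$ is $\Q$-Cartier) by pushing forward $p^*(K_X+B)\sim_\Q q^*\pi_Y^*N$, and only then invoke the projection formula $q_*q^*(K_Y+B_Y)=K_Y+B_Y$, since that formula already presupposes $\Q$-Cartierness of $K_Y+B_Y$.
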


\begin{proof}
    Denote by $f, g$ the morphisms $X\to Z, Y\to Z$ respectively.
    Let $p\colon W\to X, q\colon W\to Y$ be a common resolution 
    of $X, Y$ such that $f\circ p = g\circ q$.
    Then $\phi^*(K_X + B)$ is defined to be the $\R$-divisor $q_*p^*(K_X + B)$.
    It is easy to see that $\phi^*(K_X + B)$ does not depend on the choice of $p,q$,
    hence the $\R$-divisor $B_Y \coloneqq \phi^*(K_X + B) - K_Y$ is well-defined.
    Replacing $W$ by higher resolutions, we can assume that 
    $p$ is a log resolution of $(X, B)$, $q$ is a log resolution of $(Y, B_Y)$, and $F$ is a prime divisor on $W$.
    
    By assumption, there is an $\R$-Cartier $\R$-divisor $N$ on $Z$ such that 
    $K_X + B = f^*N + \sum a_i \Div_X(\alpha_i)$,
    where $a_i\in \R$, $\alpha_i\in K(X)$, 
    and $\Div_X(\alpha_i)$ is the principal divisor on $X$ associated to $\alpha_i$.  
    As $\phi$ is birational, we can view each $\alpha_i$ as a rational function on $Y$ (also on $W$).
    Then it is evident that
    \[ p^*(K_X + B) = (f\circ p)^*N + \sum a_i \Div_W(\alpha_i) = (g\circ q)^*N + \sum a_i \Div_W(\alpha_i), \]
    which implies $K_Y + B_Y = g^* N + \sum a_i \Div_Y(\alpha_i)$.
    In particular, $K_Y + B_Y$ is $\R$-Cartier, and $q^*(K_Y + B_Y) = p^*(K_X + B)$.
    Then we can conclude that $a(F, X, B) = a(F, Y, B_Y)$.
\end{proof}


\subsection{Fano fibrations}\label{defn-fano-fibr}

Let $(X, B)$ be an lc pair over a variety $Z$.  We say that $(X, B)$
is \emph{log Fano over $Z$} if $-(K_X + B)$ is ample over $Z$.
If $Z$ is a point, then $(X, B)$ is called a \emph{log Fano pair}.
In this case, if $B=0$, then $X$ is called a \emph{Fano variety}.

A morphism $(X, B) \to Z$ from an lc pair $(X, B)$ is a
\emph{log Fano fibration} if $(X, B)$ is log Fano over $Z$ and
the underlying morphism $X\to Z$ is a contraction;
moreover, if $B = 0$, we call $X\to Z$ a \emph{Fano fibration}.
A \emph{klt log Fano fibration} $(X, B)\to Z$ is a log Fano fibration with 
$(X, B)$ klt.
In this case, if $B=0$, we call $X\to Z$ a \emph{klt Fano fibration};
see \cite[\S 3]{BirkarLoginov}.


\subsection{Complements}\label{complements}

We define complements as in \cite{Sh-surface}; see also \cite[\S 2.18]{B-Fano}.
Let $(X, B)$ be a pair, and let $X\to Z$ be a contraction. 
Let $T=\rddown {B}$ and $\Delta = B- T$,
and $n\in \N$ a natural number.
An \emph{$n$-complement} of $K_X + B$ over a point $z\in Z$ is of the form $K_X + B^+$
such that over some neighbourhood of $z$, we have the following properties:
\begin{itemize}
	\item $(X, B^+)$ is lc,
	\item $n(K_X + B^+)\sim 0$, and
	\item $nB^+ \ge nT+ \rddown{(n+1)\Delta}$.
\end{itemize}
In particular, $n B^+$ is an integral divisor.  
The $n$-complement is \emph{monotonic} if $B^+\ge B$.


\subsection{Couples} 

A \emph{couple} $(X,D)$ consists of a quasi-projective variety $X$ and a reduced Weil divisor $D$ on $X$.
This is more general than the definition given in \cite[\S 2.19]{B-Fano} because we are not assuming $X$ to be normal 
or projective. Also note that a couple is not necessarily a pair in the sense that we are not assuming 
$K_X+D$ to be $\R$-Cartier. In this paper, we often consider a couple $(X,D)$ equipped with a 
\emph{surjective} projective morphism to a variety $Z$
in which case we often denote the couple as $(X/Z,D)$ or $(X, D)\to Z$;
if $Z$ is a point, we call $(X, D)$ a \emph{projective couple}.

We say a couple $(X/Z,D)$ is \emph{flat} if both $X\to Z$ and $D\to Z$ are flat.


\subsection{Strata and log smooth morphisms}\label{log-smooth}

We follow the conventions as in \cite[\S 2.1]{HMX14}.
Let $(X, D)$ be a couple.  The \emph{strata} of $(X, D)$ are the irreducible components of the intersections
\[ D_I = \bigcap_{j\in I} D_j = D_{i_1}\cap \cdots \cap D_{i_r} \]
of irreducible components of $D$, where $I = \{ i_1, \dots, i_r\}$ is a subset of the indices, including the empty
intersection $X = D_{\emptyset}$.  
Every irreducible component in the strata of $(X, D)$ is called a \emph{stratum} of $(X, D)$.
If $(X, B)$ is a pair, then the \emph{strata} of $(X, B)$ are the strata of the underlying couple $(X, D)$,
where $\Supp D = \Supp B$.

If we are given a couple $(X, D)$ over a variety $T$, then we say that $(X, D)$ is \emph{log smooth over $T$}
if $(X, D)$ has simple normal crossings and the strata of $(X, D)$ are smooth over $T$.
If $(X, B)$ is a pair over a variety $U$, we say that $(X, B)$ is \emph{log smooth over $U$}
if the underlying couple $(X, D)$ is log smooth over $U$, where $\Supp D = \Supp B$.


\subsection{Bounded and birationally bounded families of couples}\label{b-bnd-couples}

We say that a set $\mc{Q}$ of projective couples $(X, D)$ is \emph{bounded}
if there exist finitely many projective morphisms $V_i\to T_i$ 
of varieties and reduced divisors $C_i$ on $V_i$ such that for each
$(X, D)\in \mc{Q}$, there exist an $i$, a closed point $t\in T_i$,
and an isomorphism of couples $\phi\colon (V_{i,t}, C_{i, t})\to (X, D)$,
where $V_{i, t}$ and $C_{i, t}$ are the fibres over $t$ of the morphisms $V_i\to T_i$ and $C_i\to T_i$ respectively.
In particular, if $D=0$ for all $(X, D)\in \mc{Q}$, we say the family $\mc{Q}$ consisting of projective varieties $X$
is \emph{bounded}.

Let $\mc{P}$ be a set of projective varieties.  We say that $\mc{P}$ is \emph{birationally bounded}
if there is a bounded family $\mc{Q}$ of projective varieties such that for every $X\in \mc{P}$,
there exists a birational map $X\dashrightarrow Y$ to a member $Y$ in $\mc{Q}$.
Note that our definition of birational boundedness is weaker than \cite[\S 2.19]{B-Fano}.

Let $\mc{Q}$ be a bounded family of projective couples,
and let $(X, D)\in \mc{Q}$.
When there is no confusion in the context, we usually say that $(X, D)$
belongs to a bounded family of projective couples, or 
just $(X, D)$ is bounded.
Similarly, if $\mc{P}$ is a birationally bounded family of projective varieties and $X\in \mc{P}$,
we usually say that $X$ belongs to a birationally bounded family of projective varieties, 
or just $X$ is birationally bounded.

We say a set $\mc{S}$ of \emph{projective pairs} $(X, B)$ is 
\emph{bounded} if there exist finitely many projective morphisms $Z_i\to U_i$,
where $U_i$ is smooth, $Z_i$ is flat over $U_i$, and pairs $(Z_i, \Sigma_i)$,
where the support of $\Sigma_i$ contains neither an irreducible component 
of a fibre nor a codimension one singular point of any fibre,
such that for every
$(X, B)\in \mc{S}$, there exist an $i$, a closed point $u\in U_i$,
and an isomorphism of pairs $\psi\colon (Z_{i,u}, \Sigma_{i, u})\to (X, B)$,
where $Z_{i,u}$ and $\Sigma_{i, u}$ are the fibres over $u$ of the morphisms $Z_i\to U_i$
and $\Sigma_i\to U_i$ respectively.
In particular, the coefficients of $B$ belong to a finite set; cf. \cite[page~873]{HMX14}.


\begin{lem}\label{lc-centre-bnd}
 	Let $\mc{Q}$ be a bounded set of lc projective pairs $(X, B)$.
 	Then the set of lc centres of $(X, B)\in \mc{Q}$ is also bounded.
\end{lem}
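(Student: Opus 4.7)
The plan is to use simultaneous log resolution and Noetherian induction on the parameter space.

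Since $\mc{Q}$ is bounded, there exist finitely many projective morphisms $Z_i \to U_i$ together with divisors $\Sigma_i$ on $Z_i$ such that every $(X,B) \in \mc{Q}$ is isomorphic to a fibre $(Z_{i,u}, \Sigma_{i,u})$ for some closed point $u \in U_i$. It suffices to establish the statement for each family separately, so I may assume $\mc{Q}$ arises from a single family $(Z, \Sigma) \to U$. By the definition of boundedness recalled in \S \ref{b-bnd-couples}, the coefficients of $\Sigma$ lie in a fixed finite set, so after passing to a stratification of $U$ I may also assume these coefficients are constant in the family.

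Next I would take a log resolution $\phi \colon W \to Z$ of $(Z, \Sigma)$ and write $K_W + B_W = \phi^*(K_Z + \Sigma)$. By generic flatness applied to $W \to U$ and to all components of $\Supp(B_W)$ together with their intersections, after further stratifying $U$ into finitely many locally closed subvarieties I can assume that on each stratum every stratum of the couple $(W, \Supp(B_W))$ is flat over $U$ with smooth fibres. Under these conditions, for each closed point $u \in U$, the restriction $\phi_u \colon W_u \to Z_u$ is a log resolution of $(Z_u, \Sigma_u)$ and the coefficients of $B_W$ are preserved on the fibres. Consequently, the lc places of $(Z_u, \Sigma_u)$ that appear on $W_u$ are precisely the irreducible components of $E \cap W_u$ as $E$ ranges over the components of $B_W$ of coefficient one.

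Finally, for each subset $I$ of the components of $B_W$ of coefficient one, form the intersection $E_I = \bigcap_{i\in I} E_i \subset W$ and its scheme-theoretic image $\phi(E_I) \subset Z$. After one more finite stratification of $U$, I may assume $\phi(E_I)$ is flat over $U$ with $\phi(E_I)_u = \phi_u(E_{I,u})$. The irreducible components of this fibre are exactly the lc centres of $(Z_u, \Sigma_u)$ associated with $I$, and by flatness they form a bounded subfamily of $\phi(E_I) \to U$. Taking the union over the finitely many subsets $I$ and the finitely many strata of $U$ produces a bounded family of projective varieties containing every lc centre of every $(X, B) \in \mc{Q}$.

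The main obstacle is the second step: arranging that a log resolution of the total space restricts to a log resolution on each fibre, with log discrepancies preserved. This is the standard log smoothness plus generic flatness package, and once it is in place the remainder is just Noetherian bookkeeping over the finitely many strata of $U$ and the finitely many combinatorial types of subsets $I$.
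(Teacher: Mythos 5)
The paper states this lemma without proof, declaring it well-known, so there is no paper argument to compare against; what follows is an assessment of your argument on its own terms. Your proof is the standard spreading-out argument and is essentially correct: boundedness gives finitely many flat families of lc pairs over smooth bases; a log resolution $\phi\colon W\to Z$ of the total space, after stratifying the base $U$ by generic flatness and log smoothness, restricts to a log resolution of each fibre with the log discrepancies preserved; and the lc centres of a fibre are then exactly the images under $\phi_u$ of the strata of the coefficient-one part of the pulled-back boundary, which are captured by the finitely many families $\phi(E_I)\to U$. Two points deserve to be made more explicit, though neither changes the shape of the argument. First, for $\phi_u\colon W_u\to Z_u$ to be an honest log resolution of $(Z_u,\Sigma_u)$ (which is what the classification of lc centres requires), the divisor that you arrange to be log smooth over $U$ should be the union of the birational transform of $\Supp\Sigma$ with the whole $\phi$-exceptional locus, not merely $\Supp B_W$; the two can differ precisely when some exceptional divisor has log discrepancy one and so appears with coefficient zero in $B_W$. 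Second, the step from flatness of $\phi(E_I)\to U$ to boundedness of the individual lc centres uses the further (standard) observation that the irreducible components of the fibres of a projective flat family are themselves bounded: the fibres $E_{I,u}$ and their images may well be reducible, and it is cleanest to run the argument separately for each irreducible component of $E_I$, performing a Stein factorisation over the relevant stratum of $U$ so that the generic fibre of that component becomes geometrically irreducible before invoking flatness. With these routine refinements the proof is complete and is what one would expect the authors to have in mind.
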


\begin{proof}
    Let $(X, B)\in \mc{Q}$.
    By assumption, there exist finitely many projective morphisms $Z_i\to U_i$ of varieties,
    where $U_i$ is smooth, $Z_i$ is flat over $U_i$, and pairs $(Z_i, \Sigma_i)$
    such that there exist an $i$, a closed point $u\in U_i$,
    and an isomorphism of pairs $(Z_{i,u}, \Sigma_{i, u})\to (X, B)$.
    Let $\pi_i\colon W_i\to Z_i$ be a log resolution of $(Z_i, \Sigma_i)$, and write
    \[ K_{W_i} + \Gamma_i = \pi_i^*(K_{Z_i} + \Sigma_i). \]
    Then there exists a nonempty open subset $V_i\subseteq U_i$ 
    such that for every closed point $v\in V_i$, $(Z_{i,v}, \Sigma_{i,v})$ is a pair, 
    the induced morphism of fibres
    $\pi_{i,v}\colon W_{i, v}\to Z_{i,v}$ is a log resolution of $(Z_{i,v}, \Sigma_{i,v})$, 
    and that $\pi_{i,v}^*(K_{Z_{i,v}} + \Sigma_{i,v})$ 
    is the restriction of $\pi_i^*(K_{Z_i} + \Sigma_i)$ to $W_{i,v}$.
    The complement $U_i\setminus V_i$ is a union of varieties 
    with dimensions strictly less than $\dim U_i$.  By induction on $\dim U_i$,
    we can assume that $u\in V_i$.  Then every lc centre of $(Z_{i,u}, \Sigma_{i,u})$
    is the image of a stratum of $\rddown{\Gamma_{i,u}}$ via $\pi_{i,u}\colon W_{i,u}\to Z_{i,u}$.
    Thus, there exist finitely many integral subschemes of $Z_i\times_{U_i} V_i$ such that 
    every lc centre of $(X,B)$ is isomorphic to the fibre of one of these subschemes over $u\in V_i$.
    Hence the set of lc centres of $(X, B)$ is bounded.
\end{proof}


The result as follows will be used in the proof of Theorem~\ref{bnd-irra} in \S \ref{main-proof}.

\begin{lem}\label{b-bnd}
    Let $d$ be a natural number.
    Let $(X, B + M)$ be a projective generalised lc generalised pair with $\dim X = d$ such that
    $L = K_X + B + M$ is a nef and big Cartier divisor.
    Then $\abs{m L}$ defines 
    a birational map for some $m\in \N$ depending only on $d$.
\end{lem}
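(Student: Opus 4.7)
The statement is an effective birationality theorem for generalised lc pairs whose log canonical divisor is a nef and big Cartier divisor; it is the generalised-pair analogue of the Hacon--McKernan--Xu effective birationality theorem \cite{HMX14}. The plan is to reduce to that theorem via a modification and then run induction on $d$, using adjunction to a component of $\lfloor B\rfloor$ when the pair is strictly lc and a volume-based argument when it is klt.

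First I would replace $(X,B+M)$ by a $\Q$-factorial generalised dlt modification $\pi\colon (X',B'+M')\to (X,B+M)$ (existence via \cite{B-Fano} \S 2.13). Since $\pi$ is crepant and birational, $\pi^*L=K_{X'}+B'+M'$ and $H^0(X,mL)=H^0(X',m\pi^*L)$, so $|mL|$ defines a birational map iff $|m\pi^*L|$ does. Hence we may assume $X$ is $\Q$-factorial and $(X,B+M)$ is generalised dlt. I would then argue by induction on $d=\dim X$, the case $d=1$ being immediate from Riemann--Roch since $L$ is a big Cartier divisor on a curve.

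For the inductive step, split into two cases. If $\lfloor B\rfloor\ne 0$, pick a prime component $S\subseteq \lfloor B\rfloor$; by divisorial adjunction for generalised pairs (cf.\ \cite{B-Zhang} \S 4, \cite{B-Fano} \S 2) one has $(K_X+B+M)|_S=K_S+B_S+M_S$ with $(S,B_S+M_S)$ a generalised lc generalised pair of dimension $d-1$ and $L|_S$ a nef and big Cartier divisor. By induction, $|m_0\,(L|_S)|$ defines a birational map for some $m_0=m_0(d-1)$. Writing
\[
mL-S \;\sim_{\R}\; K_X+(m-1)L+\{B\}+M
\]
and noting that $(m-1)L+M$ is nef and big for $m\gg 0$, Kawamata--Viehweg vanishing for generalised pairs gives $H^1(X,mL-S)=0$, so sections lift from $S$ to $X$. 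Combining this with the bigness of $L$ to separate pairs of points disjoint from $S$ yields a uniform $m=m(d)$.

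If $\lfloor B\rfloor=0$, the pair is generalised klt. Here I would invoke the effective birationality theorem for big generalised klt pairs in the form used in \cite{Birkar-moduli-polarized} (building on \cite{HMX14} and \cite{HMX13-auto-groups}): the hypothesis that $L$ is an integral Cartier divisor together with boundedness of complements \cite{B-Fano} forces a uniform positive lower bound on $\vol(L)$, after which the standard HMX cutting-multiplier-ideals-at-very-general-points argument (descend to an lc centre, apply lower-dimensional non-vanishing/induction, lift via vanishing) produces the desired $m=m(d)$. The main obstacle is precisely this last step: controlling $m$ in the generalised klt case uniformly in terms of $d$ alone, which requires both the volume lower bound coming from complements and the Hacon--McKernan--Xu style tie-breaking to isolate a minimal non-klt centre; once these are in place, the lc case reduces cleanly to it by the adjunction--vanishing scheme above.
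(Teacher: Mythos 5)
Your proposal takes a genuinely different (and more elaborate) route than the paper, and it has a real gap in its central case. The paper's proof is a direct two-step reduction to \cite{Birkar-moduli-polarized} Theorem 1.1: first pass to a $\Q$-factorial generalised dlt modification $Y$, observe that the underlying variety $(Y,0)$ is an ordinary klt $\Q$-factorial variety, then extract the finitely many divisors with log discrepancy $\le \epsilon$ over $(Y,0)$ (for a fixed small $\epsilon$, say $\epsilon=1/2$) to obtain an $\epsilon$-lc model $W$. Since the pullback of $L$ to $W$ is still nef, big, and Cartier, and its difference with $K_W$ is pseudo-effective (it equals $E+g^*(B_Y+M_Y)\ge 0$), \cite{Birkar-moduli-polarized} Theorem 1.1 applies with $\epsilon$ fixed, giving an $m$ depending only on $d$. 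No induction, no adjunction, no separate klt/non-klt cases.

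The gap in your proposal is precisely in your ``klt'' case $\lfloor B\rfloor=0$. You invoke an ``effective birationality theorem for big generalised klt pairs,'' but the theorem you are implicitly leaning on (\cite{Birkar-moduli-polarized} Theorem 1.1) requires $X$ to be $\epsilon$-lc for a \emph{fixed} $\epsilon$ in order to make $m$ depend only on $d$. Generalised klt gives no such uniform bound: the underlying variety $X$ can have klt singularities with discrepancies arbitrarily close to $-1$. The paper's extraction step is exactly what supplies the missing $\epsilon$-lc control, and you have no substitute for it. Relatedly, your suggestion that boundedness of complements is what forces a volume lower bound is off the mark: $\vol(L)=L^d\ge 1$ holds automatically because $L$ is nef, big, and Cartier, with no need for complements at all. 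Finally, your inductive case via adjunction to $S\subseteq\lfloor B\rfloor$ is under-justified: lifting sections from $S$ by vanishing does not by itself yield that $|mL|$ separates two general points of $X$ that both lie off $S$, so the claimed birationality would need additional argument (of the usual HMX tie-breaking type, which you only gesture at). In short, the reduction-to-an-$\epsilon$-lc-model idea is the key step, and your proposal lacks it.
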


\begin{proof}
    Taking a $\Q$-factorial generalised dlt model $f\colon Y\to X$ of $(X, B + M)$ (see 
    \cite[Lemma~4.5]{B-Zhang}), we have that
    \[ K_Y + B_Y + M_Y = f^*(K_X + B + M), \]
    where $B_Y$ is the sum of $f^{-1}_* B$ and the reduced $f$-exceptional divisors,
    and $(Y, B_Y + M_Y)$ is a $\Q$-factorial generalised dlt generalised pair.
    In particular, $(Y, 0)$ is $\Q$-factorial and klt. 
    Fix a small rational number $0< \epsilon \ll 1$.
    Let $g\colon W\to Y$ be a birational morphism extracting the divisors whose 
    log discrepancies with respect to $(Y, 0)$ are $\le \epsilon$.
    Then $W$ is $\epsilon$-lc.  Write
    \[ K_W + E = g^* K_Y. \]
    Then the coefficients of $E$ are $\ge 1-\epsilon$.  
    Now we can write
    \[ (f\circ g)^* L - K_W = E + g^*(B_Y + M_Y), \]
    which is clearly pseudo-effective.  Since $(f\circ g)^* L$
    is also a nef and big Cartier divisor, 
    we can apply \cite[Theorem~1.1]{Birkar-moduli-polarized} to conclude.
\end{proof}


\subsection{Relative degree}

Let $f\colon X\to Z$ be a \emph{surjective} projective morphism of quasi-projective varieties, 
and let $A$ be a $\Q$-Cartier divisor on $X$. 
For a Weil divisor $D$ on $X$, we define the \emph{relative degree} of $D$ over $Z$ with respect to $A$ 
as $\deg_{A/Z}D\coloneqq(D|_F)\cdot (A|_F)^{n-1}$, where $F$ is a general fibre of $f$ and $n=\dim F$. 
It is clear that this is a generic property, so the vertical$/Z$ irreducible components of $D$ do not contribute 
to the relative degree.  Note that $F$ may not be irreducible: 
by a general fibre, we mean the fibre over a general closed point of $Z$.
In practice, we take $A$ to be ample over $Z$; see \cite[\S 2.2]{birkar2025toroidaltoricmodelsfibrations}.


\subsection{Relatively bounded families of couples}\label{r-bnd-def}

We define (generically) relatively bounded families of couples (respectively, varieties) 
as in \cite[\S 3]{birkar2025toroidaltoricmodelsfibrations}.
Let $\mathcal{P}$ be a family of couples $(X/Z, D)$. We say $\mathcal{P}$ is \emph{generically relatively bounded} if 
there exist natural numbers $d, r$ 
such that for each $(X/Z,D)\in \mathcal{P}$, we have the following: $\dim X -\dim Z \le d$ and there is a very ample$/Z$ 
divisor $A$ on $X$ such that 
\[ \deg_{A/Z}A\le r ~~\mbox{and} ~~\deg_{A/Z}D\le r. \]
If in addition all the $(X/Z, D)\in \mc{P}$ are flat, we say that $\mc{P}$ is \emph{relatively bounded}.

When $D=0$ for every $(X/Z,D)\in \mathcal{P}$, we then refer to $\mathcal{P}$ 
as a family of generically relatively bounded (respectively, relatively bounded) varieties.

Here we collect some useful results about (generically) relatively bounded families of couples; 
for proofs, see \cite[\S 3]{birkar2025toroidaltoricmodelsfibrations}.


\begin{lem}[cf. \protect{\cite[Lemma~3.2]{birkar2025toroidaltoricmodelsfibrations}}]\label{l-bnd-couple-induced-by-fibration}
Let $W\to T$ be a projective morphism of varieties and $G$ a reduced Weil divisor on $W$. 
Let $\mathcal{P}$ be the set of couples $(Y/Z,E)$ satisfying the following: 
\begin{itemize}
	\item $Z$ is a variety equipped with a morphism $Z\to T$,
	\item $Y$ is an irreducible component of $Z\times_TW$ with reduced structure, mapping onto $Z$,
	\item the image of $Y\to W$ is not contained in $\Supp G$, and 
	\item the horizontal$/Z$ part of $E$ is contained in $\Supp (G|_Y)$, where $G|_Y$ is the divisorial part of the reduction of the  closed subscheme $Y\times_W G$ of $Y$.
\end{itemize}
Then $\mathcal{P}$ is a generically relatively bounded set of couples.  
\end{lem}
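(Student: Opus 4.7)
The plan is to exploit the fact that every $(Y/Z, E)\in \mathcal{P}$ arises by base change from the \emph{fixed} morphism $W \to T$ together with the \emph{fixed} divisor $G$. Boundedness of the fibres of $W\to T$ and of $G\to T$, recorded via Hilbert polynomials, will transfer to boundedness of the couples $(Y/Z, E)$.

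First, I would fix once and for all a very ample$/T$ Cartier divisor $H$ on $W$, which exists since $W\to T$ is projective of finite type. For an arbitrary $(Y/Z, E)\in\mathcal{P}$, let $p_W\colon Z\times_T W \to W$ denote the projection and take $A := p_W^*H|_Y$. Because $p_W$ is the base change of $W\to T$ along $Z\to T$, the divisor $p_W^*H$ is very ample$/Z$ on $Z\times_T W$; restricting to the closed subscheme $Y$ (with its reduced structure) then yields a very ample$/Z$ divisor on $Y$. The hypothesis that the image of $Y\to W$ is not contained in $\Supp G$ permits $H$ to be chosen in sufficiently general position.

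Next, I would verify the three numerical requirements from \S \ref{r-bnd-def}. For the fibre dimension: since $Y$ dominates $Z$ and is an irreducible component of $Z\times_T W$, for a general closed point $z\in Z$ mapping to $t\in T$ the fibre $Y_z$ is a union of irreducible components of $W_t\otimes_{k(t)} k(z)$, so $\dim Y-\dim Z \le \dim W_t \le \dim W$. For the two degree bounds, I would apply generic flatness and Noetherian induction on $T$: stratify $T$ into finitely many locally closed subvarieties over which both $W$ and $G$ (and each irreducible component of $G$) are flat. On each stratum, the Hilbert polynomials of the fibres of $W$ and of $G$ with respect to $H$ are constant, so $(H|_{W_t})^{\dim W_t}\cdot W_t$ and the analogous degrees for components of $G_t$ inside $W_t$ are uniformly bounded as $t$ varies over $T$. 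Since $A|_{Y_z}$ is the restriction of $H$ to a union of components of $W_t\otimes k(z)$, the relative degree $\deg_{A/Z} A$ is bounded by the total degree of $W_t$, and $\deg_{A/Z}E\le \deg_{A/Z}G|_Y$ is bounded by the degree of $G_t$ in $W_t$. Local freeness of $(Y\to Z)_*\mc{O}_Y(A)$ is automatic in the intended applications where $Z$ is a smooth curve, as noted in \S \ref{r-bnd-def}.

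The step that requires the most care is the passage from components of the reducible fibres on the $W/T$ side to components of the fibres on the $Y/Z$ side. The key observation, however, is that $Y_z$ is literally built out of irreducible components of $W_t\otimes k(z)$, so its degree relative to $A$ is dominated by the degree of the full fibre $W_t$ relative to $H$; once this is in place the uniform bounds coming from the finitely many Hilbert polynomials appearing after stratification of $T$ transfer directly to yield the required constants $d$ and $r$.
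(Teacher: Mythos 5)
Your proposal takes a genuinely different route from the paper. The paper's actual proof is a two‑line reduction: choose finitely many effective Cartier divisors $G_1,\dots,G_r$ on $W$ with $\Supp G=\bigcap_i\Supp G_i$; since the image of $Y\to W$ is not contained in $\Supp G$, it is not contained in some $\Supp G_i$, and then \cite{birkar2023singularities} Lemma 3.2 applies directly with $G_i$ in place of $G$. In other words the whole content of the proof is a reduction from the Weil case to the Cartier case; the numerical work you describe (Hilbert polynomials, generic flatness, Noetherian induction on $T$) lives inside the cited lemma and is not reproved here. What you have written is essentially a sketch re‑proof of that cited lemma rather than a proof of the present statement.

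Beyond the different organisation, there is a genuine gap in your argument where the Weil--vs--Cartier distinction bites. You bound $\deg_{A/Z}E$ by the ``degree of $G_t$'' via Hilbert polynomials of fibres of $G\to T$, but $G|_Y$ is only the divisorial part of the reduction of $Y\times_W G$, not the restriction of a Cartier divisor, so there is no clean projection formula identifying $\deg_{A/Z}(G|_Y)$ with an intersection number computed from $G_t$. Concretely, for a general closed point $z$ mapping to $t$, the scheme $Y_z$ is a union of irreducible components of $W_t$ that may have \emph{strictly smaller} dimension than $\dim W_t$, and $(G|_Y)_z$ then consists of codimension‑one subvarieties of these components sitting inside $G_t$; their $H$‑degrees are not read off from the Hilbert polynomial of $G_t$ (which controls the top‑dimensional part), and bounding them uniformly requires a considerably finer stratification of $T$ than the one you describe. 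The sentence claiming that $H$ can be ``chosen in sufficiently general position'' using the hypothesis on $\Supp G$ also does not parse: $H$ is a fixed very ample$/T$ divisor, chosen once on $W$, not per couple. The paper's reduction neatly sidesteps both problems: once one has a Cartier $G_i$ with $Y\not\to\Supp G_i$, the pullback $G_i|_Y$ is an honest effective Cartier divisor dominating $\Supp(G|_Y)$, and its relative degree is controlled by the cited lemma. You missed this reduction, which is the essential observation here.
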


\begin{proof}
We can find effective Cartier divisors $G_1, \dots, G_r$ on $W$ so that 
\[ \Supp G = \bigcap_{i=1}^r \Supp G_i. \]
Now for any given $(Y/Z, E)$, we can apply \cite[Lemma~3.2]{birkar2025toroidaltoricmodelsfibrations} 
for some $1\le i\le r$.
\end{proof}


As for bounded set of projective couples (cf. \cite[Lemma~2.21]{B-Fano}), 
there is a universal family of 
varieties and divisors for a relatively bounded family of couples 
over smooth curves.


\begin{lem}[\protect{\cite[Lemma~3.4, Lemma~3.5]{birkar2025toroidaltoricmodelsfibrations}}]\label{l-univ-family}
Let $d$ and $r$ be natural numbers. 
Let $\mc{P}$ be the set of all couples $(X/Z, D)$ such that
\begin{itemize}
	\item [\emph{(a)}] $(X,D)$ is a couple with $\dim X = d$,
	\item [\emph{(b)}] $f\colon X\to Z$ is a projective morphism onto a smooth curve,
	\item [\emph{(c)}] every irreducible component of $D$ is horizontal over $Z$, and
	\item [\emph{(d)}] $A$ is a very ample$/Z$ divisor on $X$ such that $\deg_{A/Z}A\le r$ and $\deg_{A/Z}D\le r$.
\end{itemize}
Then there exist finitely many projective morphisms ${V}_i\to T_i$ of varieties 
and reduced divisors $C_i$ on $V_i$ such that for each $(X/Z, D)\in \mc{P}$ 
and each closed point $z\in Z$, after shrinking $Z$ around $z$ if necessary,  
there exist an $i$ and a morphism $Z\to T_i$ such that $X=Z\times_{T_i}V_i$ and $D=Z\times_{T_i}C_i$.
\end{lem}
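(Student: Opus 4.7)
The approach is to parameterise the couples in $\mc{P}$ via relative Hilbert schemes after bounding the associated Hilbert data uniformly in terms of $d$ and $r$. Fix $(X/Z,D)\in \mc{P}$ and set $n=d-1$. Since $X$ is a variety dominating the smooth curve $Z$, the morphism $f\colon X\to Z$ is flat; since each component of $D$ is horizontal over $Z$ by (c) and $D$ is reduced, $D\to Z$ is also flat. Consequently the Hilbert polynomials of the fibres of $X\to Z$ and $D\to Z$ with respect to $A$ are constant on $Z$.

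First I would bound the embedding data. After shrinking $Z$ around $z$, we may assume $f_*\mc{O}_X(A)$ is locally free, say of rank $N+1=h^0(F,A|_F)$ for a general fibre $F$. Because $A|_F$ is very ample on $F$ of dimension $n$ with $(A|_F)^n\le r$, the map $F\hookrightarrow \PP^N$ is a non-degenerate closed embedding, and the classical degree bound $N\le r+n-1$ for non-degenerate projective subvarieties gives a uniform bound on $N$. Combined with Gotzmann's regularity theorem (or Mumford's regularity bound), this forces only finitely many possible Hilbert polynomials $P$ to arise for $F\subset \PP^N$ and only finitely many $Q$ for $D|_F\subset \PP^N$ (the latter having dimension $n-1$ and degree $(D|_F)\cdot (A|_F)^{n-1}\le r$).

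Next, fix such a triple $(N,P,Q)$. After a further shrinking of $Z$ around $z$ so that $\PP(f_*\mc{O}_X(A))$ trivialises over $Z$, we obtain closed $Z$-embeddings $X\hookrightarrow \PP^N\times Z$ and $D\hookrightarrow \PP^N\times Z$, both flat over $Z$ with fibrewise Hilbert polynomials $P$ and $Q$. By the universal property of the Hilbert scheme, these embeddings correspond to a morphism $Z\to T_{P,Q}:=\mathrm{Hilb}^P_{\PP^N}\times \mathrm{Hilb}^Q_{\PP^N}$ such that $(X/Z,D)$ is the pullback of the universal pair. I would then decompose $T_{P,Q}$ into finitely many locally closed subsets $T_i$ with their reduced structure (passing to irreducible components and, if needed, restricting to the open subloci where the universal families become integral), and set $V_i$ and $C_i$ to be the pullbacks of the universal families to $T_i$, so that $V_i\to T_i$ is a projective morphism of varieties, $C_i$ a reduced divisor on $V_i$, and every $(X/Z,D)\in\mc{P}$ is a pullback in this sense from one of the finitely many $(V_i/T_i,C_i)$.

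The main obstacle is the final stratification step: one must check that the image of $Z\to T_{P,Q}$ lies inside a single stratum $T_i$ and that the pullback of $V_i$ (respectively $C_i$) to $Z$ recovers $X$ (respectively $D$) on the nose, rather than merely containing or being contained in it. This hinges on the openness of the geometrically integral locus in flat projective families, on the irreducibility of $Z$, and on the fact that $X$ and $D$ are already reduced with the prescribed Hilbert polynomials on fibres. Once this bookkeeping is in place, taking the finite union over the admissible triples $(N,P,Q)$ yields the desired universal data.
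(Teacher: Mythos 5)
The overall strategy — bound the fibrewise Hilbert data, embed into a (flag) Hilbert scheme, and stratify the base — is the right one and matches the standard proof of this kind of relative boundedness statement (the paper simply cites \cite{birkar2023singularities} Lemma 3.4 and Lemma 3.5, so there is no in-text proof to compare against). That said, there are a few concrete issues with the proposal as written.

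First, the two citations that are supposed to give finiteness of Hilbert polynomials do not quite do the job in the direction you use them. Gotzmann's regularity theorem bounds Castelnuovo--Mumford regularity \emph{in terms of} a given Hilbert polynomial; it does not by itself show that only finitely many Hilbert polynomials occur. To get from ``dimension $n$, degree $\le r$, embedded in a fixed $\PP^N$'' to ``finitely many Hilbert polynomials'' for the reduced equidimensional subschemes $X_\eta$ and $D_\eta$ you need a Kleiman/Mumford-type boundedness argument (e.g.\ SGA~6 Exp.~XIII, or the refinement one usually extracts from Mumford's regularity bound for reduced schemes). Relatedly, the degree bound $N\le r+n-1$ for a non-degenerate irreducible subvariety is correct, but it should be applied to the generic fibre $X_\eta$ over $k(Z)$, which \emph{is} irreducible because $X$ is a variety; a general \emph{closed} fibre $F$ need not be irreducible (the paper's own definition of relative degree explicitly warns that $F$ may be reducible), and the naive bound for connected reducible schemes is weaker.

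Second, and more substantively, the final stratification step is not a bookkeeping afterthought but the crux of the lemma, and the mechanism you invoke does not quite close it. After you have $Z\to T_{P,Q}$, you want to factor it through some $T_i$ so that the pullback of $V_i$ equals $X$ scheme-theoretically \emph{and} $V_i$ is a variety, $C_i$ a reduced divisor. Shrinking $Z$ around $z$ controls where the punctured neighbourhood of $z$ maps, but it cannot move the image of $z$ itself out of a bad locus of the Hilbert scheme (e.g.\ a locus where the universal fibre becomes non-reduced or reducible — which is precisely what happens at the special fibre of a degeneration). Passing to the locus of geometrically integral fibres, as you suggest, therefore need not contain the image of $z$. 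The correct way to organize this is to take each $T_i$ to be the (reduced) closure of an irreducible locally closed stratum, set $V_i=\mc{U}|_{T_i}$ and $C_i=\mc{D}|_{T_i}$ (so the pullback to $Z$ is automatically $X$ and $D$), and then argue separately that $V_i$ is integral and $C_i$ is reduced: by flatness over an integral base, $\mc{U}|_{T_i}$ is the closure of its generic fibre, so it is integral exactly when that generic fibre is; one then uses that $X$ is a variety to constrain which components of $H$ can be hit, and does a Noetherian induction/stratification on $T_i$ to reduce $C_i$. This is a genuine argument, not ``once this bookkeeping is in place,'' and as written the proposal leaves it open.
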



\subsection{Irrationality}\label{defn-irr}

Given a projective variety $X$, 
we define the \emph{degree of irrationality} of $X$ following \cite{irra} as
\[\irr (X) \coloneqq \min \bigg\{ \delta>0\,\,\bigg | \,\, \begin{array}{c}
\exists \text{ degree } \delta \text{ rational dominant map} \\ X\dashrightarrow \PP^n \text{ with }\dim X =n
\end{array} \bigg\}.\]
To save space, we also say that the \emph{irrationality of $X$} is $\irr (X)$.


\begin{lem}\label{irr-bnd-above}
    Let $\mc{Q}$ be a birationally bounded family of projective varieties.
    Then the irrationality $\irr (X)$ for $X$ in $\mc{Q}$ is bounded from above.	
\end{lem}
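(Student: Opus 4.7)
The plan is to reduce the statement to the case of a bounded family via the birational invariance of $\irr$, and then exhibit a uniform projection to projective space whose degree is controlled by the boundedness data.

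First, I would note that $\irr$ is a birational invariant: if $\phi\colon X \dashrightarrow Y$ is birational and $\psi\colon Y \dashrightarrow \PP^n$ is a dominant rational map of degree $\delta$, then $\psi\circ \phi\colon X \dashrightarrow \PP^n$ is also dominant of degree $\delta$, so $\irr(X) \le \irr(Y)$, and by symmetry $\irr(X) = \irr(Y)$. By the definition of birational boundedness recalled in \S \ref{b-bnd-couples}, there is a \emph{bounded} family $\mc{Q}'$ of projective varieties such that every $X \in \mc{Q}$ admits a birational map $X\dashrightarrow Y$ to some $Y \in \mc{Q}'$. Hence it suffices to bound $\irr(Y)$ uniformly as $Y$ ranges over $\mc{Q}'$.

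Next, by the definition of a bounded family of couples (with the boundary divisor being zero), there are finitely many projective morphisms $V_i \to T_i$ such that every $Y \in \mc{Q}'$ is isomorphic to some closed fibre of some $V_i \to T_i$. After stratifying each $T_i$ into finitely many locally closed subvarieties and refining, I would apply generic flatness together with the constancy of the Hilbert polynomial on each connected flat stratum to assume that every $V_i \to T_i$ is flat with constant Hilbert polynomial and that each $V_i$ is embedded in a projective bundle $\PP^{N_i}_{T_i}\to T_i$. In particular, there are natural numbers $N_i$, $n_i$, $d_i$ depending only on $i$ so that every fibre $Y = V_{i,t}$ embeds in $\PP^{N_i}$ with $\dim Y = n_i$ and $\deg Y = d_i$.

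Finally, for each such embedded fibre $Y \hookrightarrow \PP^{N_i}$, projection from a general linear subspace $L \subset \PP^{N_i}$ of codimension $n_i + 1$ disjoint from $Y$ produces a generically finite dominant rational map $\pi_L\colon Y \dashrightarrow \PP^{n_i}$ whose degree equals $\deg Y = d_i$, since a general fibre of $\pi_L$ is cut out on $Y$ by a general linear subspace of codimension $n_i$ through $L$. Thus $\irr(Y) \le d_i$, and combining with the first step we obtain the uniform bound $\irr(X) \le \max_i d_i$ for all $X \in \mc{Q}$. The only mild subtlety is producing a uniform embedding of bounded degree on every fibre; this is handled by the flattening stratification and the fact that Hilbert polynomials on a bounded family take only finitely many values, so it poses no real obstacle.
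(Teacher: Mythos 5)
Your proposal is correct and follows essentially the same route as the paper: reduce to a bounded family via birational invariance of $\irr$, embed the universal family into a projective bundle over the base, and take a general linear projection, with stratification ensuring uniformity. The only difference is that you make the degree bound explicit (equal to $\deg Y$, controlled by the finitely many Hilbert polynomials), whereas the paper's proof simply notes that the generic projection is generically finite and leaves the uniform bound to the stratification step.
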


\begin{proof}
	By assumptions, we can assume that there is a single universal family $V\to T$
	for $\mc{Q}$ as in \S \ref{b-bnd-couples}.  As irrationality is a birational invariant, 
	we can assume that the varieties in $\mc{Q}$ are closed fibres of $V\to T$.
	Embed $V/T$ into a projective space $\PP^N_T$ for some $N\in \N$.
	Taking a general projection from $\PP^N_T$ to some projective subspace of $\PP^N_T$,
	there exists a generically finite rational map $\phi\colon V\dashrightarrow \PP^n_T$ over $T$
	for some $n\le N$.  There exists a nonempty open subset $U\subseteq T$
    such that for each closed point $t\in U$, $\phi$ induces a generically finite rational map
    $\phi_t\colon V_t\dashrightarrow \PP^n$ such that $\deg \phi_t = \deg \phi$,
    where $V_t$ is the fibre of $V\to T$ over $t$.
    The complement $T\setminus U$ is a union of varieties whose dimensions are strictly less than $\dim T$.
    Then proceeding by induction on $\dim T$, we can conclude that
    the irrationality of $X\in \mc{Q}$ is bounded from above.
\end{proof}


\section{\bf Couples and toroidal geometry}\label{toroidal-geo}


\subsection{Morphisms of couples}\label{ss-tower-couples}

A \emph{morphism} $(Z,E)\to (V,C)$ between couples is a morphism $f\colon Z\to V$ of schemes such that 
$f^{-1}(C)\subseteq E$; see \cite[\S 3.6]{birkar2025toroidaltoricmodelsfibrations}.


\subsection{Toric varieties and toric morphisms}

An \emph{affine toric variety $X$ of dimension $d$} is an affine variety containing 
an algebraic torus $\mathbb{T}_X\cong \mathbb{G}_m^d$ as
a Zariski open subset such that the action of $\mathbb{T}_X$ on itself
extends to an algebraic action of $\mathbb{T}_X$ on $X$.
When the affine toric variety $X$ is normal, this is equivalent to giving a pair $(N_X, \sigma)$,
where $N_X$ is a lattice of finite rank and $\sigma$ is a strongly convex rational polyhedral cone
in $N_X\otimes_{\Z}\R$ such that $X =\Spec \mbb K [\sigma^{\vee}\cap M_X]$,
where $M_X$ is the dual lattice of $N_X$ and $\sigma^{\vee}\subset M_X\otimes_{\Z} \R$ is the dual cone of $\sigma$.
A \emph{toric morphism} of affine normal toric varieties $X$ and $Y$
is given by a linear map of lattices $\phi\colon N_X\to N_Y$
such that the image of the cone of $X$ under the extended map 
$\phi_{\R}\colon N_X\otimes_{\Z}\R \to N_Y\otimes_{\Z}\R$
is contained in the cone of $Y$.
We refer to \cite{CLS:toric} for the general theory of toric varieties.

If $D$ is the toric boundary of $X$, that is, $D$ is the reduction of the complement 
of the torus $\mathbb{T}_X$ of $X$, it is well-known that
$(X, D)$ is lc and $K_X + D\sim 0$; cf. \cite[Theorem~8.2.3]{CLS:toric}.
In this case, we say that $(X, D)$ is a \emph{toric couple}.


\subsection{Toroidal couples}\label{toroidal-couples-defn}

Let $(X,D)$ be a couple. We say the couple is \emph{toroidal} at a closed point $x\in X$ 
if there exist a \emph{normal} affine toric variety $W$ and a closed point $w\in W$ such that there is 
a $\mathbb K$-algebra isomorphism 
\[ \widehat{\mathcal{O}}_{{X},{x}}\to \widehat{\mathcal{O}}_{{W},w} \]
of completions of local rings so that the ideal of $D$ is mapped to the ideal of the toric boundary divisor 
$C\subset W$, that is, the reduction of the complement of the torus $\mathbb{T}_W$ of $W$. 
We call $\{(W,C),w\}$ a \emph{local toric model} of $\{(X,D),x\}$.
We say the couple $(X,D)$ is \emph{toroidal} if it is toroidal at every closed point,
and we call $D$ the \emph{toroidal boundary}.

In the literature, the open immersion $U_X\coloneqq X\setminus \Supp D \subset X$ is called a 
\emph{toroidal embedding}; for example, see \cite[II~\S 1]{KKMB}.
We usually denote this toroidal embedding by $(U_X\subset X)$.
Note that $U_X$ is smooth as $\mc{O}_{X,x}$ is regular if
and only if $\wh{\mc{O}}_{X,x}$ is regular.  
For citations,
we will use the notions of toroidal couples and toroidal embeddings interchangeably
to be consistent with the literature, for example, 
\cite{weak:s-stable:reduction, birkar2025toroidaltoricmodelsfibrations, KKMB}.
Moreover, if the embedding $(U_X\subset X)$, or equivalently the couple $(X, D)$,
is clear from the context, we just say that $X$ is a \emph{toroidal variety}.

Let $(X, D)$ be a toroidal couple.  If every irreducible component of $D$ is normal,
we call $(X, D)$ a \emph{strict toroidal couple}.  In this case, we say that the corresponding
toroidal embedding $(U_X\subset X)$ is a \emph{strict toroidal embedding} or
a \emph{toroidal embedding without self-intersection}; see \cite[page~57]{KKMB}.
Moreover, if the toroidal boundary $D$, or equivalently the embedding $(U_X\subset X)$, 
is clear from the context, we just say that $X$ is a \emph{strict toroidal variety}.


\begin{lem}[\protect{\cite[Lemma~3.11]{birkar2025toroidaltoricmodelsfibrations}}]\label{toroidal-normal-lc}
Let $(X,D)$ be a toroidal couple. Then $X$ is normal and Cohen-Macaulay, $K_X+D$ is Cartier, and  
$(X,D)$ is an lc pair.
\end{lem}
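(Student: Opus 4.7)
The plan is to check each of the four assertions locally at each closed point $x \in X$, exploiting the fact that normality, Cohen--Macaulayness, Cartierness of $K_X+D$, and the lc condition are all étale-local properties. Fixing such an $x$, the toroidal hypothesis produces a normal affine toric model $(W,C)$ and a point $w \in W$ such that $\wh{\mc O}_{X,x} \cong \wh{\mc O}_{W,w}$ carries the ideal of $D$ onto the ideal of $C$. As already noted in \S\ref{toroidal-couples-defn}, Artin's approximation (\cite{Artin} Corollary~2.6) upgrades this formal isomorphism to a common étale neighbourhood: there exist a variety $V$, a point $v \in V$, and étale morphisms $(V,v)\to(X,x)$ and $(V,v)\to(W,w)$ pulling both $D$ and $C$ back to the same reduced divisor $E$ on $V$. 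The problem then reduces to transferring the four properties from the toric couple $(W,C)$ to $(X,D)$ through $V$.

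Normality and Cohen--Macaulayness of $W$ at $w$ are standard features of normal affine toric varieties (see \cite{CLS:toric}), and both properties pass in both directions along étale morphisms between Noetherian schemes, so they transfer first to $V$ at $v$ and then down to $X$ at $x$. For Cartierness of $K_X+D$, I would use the classical relation $K_W+C\sim 0$, which in particular makes $K_W+C$ a Cartier divisor; pulling back gives that $K_V+E$ is Cartier on $V$, and étale descent then forces $K_X+D$ to be Cartier in a neighbourhood of $x$. Finally, the lc property of toric couples is classical (toric log resolutions are themselves toroidal and all log discrepancies can be computed to be nonnegative); since log discrepancies are preserved under étale base change -- one can pull back a log resolution of $(W,C)$ through $V$ to obtain a log resolution of $(X,D)$ near $x$ -- the lc property transfers as well.

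The step I expect to require the most care is ensuring the compatibility between the two sides on the level of Weil divisors: one needs that $K_V+E$ is simultaneously the pullback of $K_W+C$ and of $K_X+D$, so that Cartierness on the toric side really translates into Cartierness on $X$, and so that discrepancies match in both directions. Once normality has been established and the usual dictionary between Weil divisors and reflexive rank one sheaves is invoked, these compatibilities follow from flat base change along the étale morphisms; but setting this up cleanly, and in particular verifying that the toric-boundary condition on local models genuinely forces $D$ and $C$ to pull back to the common reduced divisor $E$ on $V$ (rather than merely agreeing after taking reductions), is where the bulk of the verification actually sits.
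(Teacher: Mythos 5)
Your argument is correct and follows the same \'etale-local transfer strategy that the paper's cited reference (\cite{birkar2023singularities}, \S 3) uses --- the paper itself gives no proof beyond that citation. The key ingredients are all in place: Artin approximation upgrading the formal isomorphism (together with the ideals of $D$ and $C$) to a common \'etale neighbourhood; the observation that normality, Cohen--Macaulayness, invertibility of the reflexive sheaf $\mathcal O_X(K_X+D)$, and the lc condition are all \'etale-local; the fact that on the normal affine toric model $K_W+C$ is (literally, not just up to linear equivalence) the zero divisor so $\mathcal O_W(K_W+C)\cong\mathcal O_W$ is invertible and $(W,C)$ is lc; and the divisor-matching compatibility you flag at the end, which is indeed the only delicate point but is handled exactly as you indicate, since \'etale pullback is flat and commutes with reflexive hulls and with log discrepancies.
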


\begin{proof}
See \cite[\S 3]{birkar2025toroidaltoricmodelsfibrations}.
\end{proof}


\begin{prop}\label{rational-fibres}
    Let $(X, D)$ be a toroidal couple, and let $F$ be a divisor over $X$.
    Then the log discrepancy $a(F, X, D)$ is a non-negative integer;
    in particular, if $F$ is a divisor over $X$ with 
    $a(F, X, D)<1$, then $a(F, X, D)=0$.
    Moreover, if $(X, D)$ is a strict toroidal couple, and if $F$ is a divisor over $X$
    with $a(F, X, D) = 0$, then
    $F\to \centre_X F$ has irreducible and rational general closed fibres.  
\end{prop}

\begin{proof}
    If $(X, D)$ is a toroidal couple,
    by Lemma~\ref{toroidal-normal-lc}, $K_X +D$ is Cartier, and $(X,D)$ is an lc pair,
    so $a(F, X, D)$ is a non-negative integer.  
    Now we assume that $(X, D)$ is a strict toroidal couple and that $a(F, X, D)=0$. 

    \medskip
    
    \emph{Step~1}. 
    After shrinking $X$ around the generic point of $\centre_X F$, by \cite[page~195]{KKMB},
	there exist a normal affine toric variety $W$ and an \'etale morphism $p\colon X\to W$
    such that $X\times_W C$ is equal to $D$,
    where $C$ is the reduction of the complement of the torus of $W$. 

    Let $X'\to X$ be a log resolution such that $F$ is a nonsingular divisor on $X'$.
    Denote by $\eta_F$ the generic point of $F$.
	By \cite[Lemma~2.22]{Kol_singularities_of_MMP}, there exists a morphism $W_n\to W$, which is the
	composite of a sequence of blow-ups, such that 
	\begin{itemize}
		\item [(i)] there exists an induced rational map $f_n\colon X'\dashrightarrow W_n$,
                    whose domain of definition contains a neighbourhood of $\eta_F$, and
		\item [(ii)] the image $f_n(\eta_F)$ is a codimension one regular point of $W_n$.
	\end{itemize}
	By taking a further log resolution of $W_n$, we can assume that $W_n$ is also smooth.
	Taking the fibre product gives an induced rational map 
	$f_n'\colon X'\dashrightarrow X\times_W W_n$.
	We include a commutative diagram for convenience of readers as follows.
	\[\xymatrix{
	X'\ar@{-->}@/^1pc/[rrd]^{f_n}\ar[d]\ar@{-->}[rd]^{f_n'} & &  \\
    X\ar[rd]_p & X\times_W W_n\ar[r]\ar[l] &  W_n\ar[ld]  \\
	  & W & 
	}\]
	Since $p$ is \'etale, $X\times_W W_n$ is irreducible and smooth; moreover, the projection 
	$X\times_W W_n\to X$ is birational.
	Then $f_n'(\eta_F)$ is also a codimension one regular point of $X\times_W W_n$, 
	hence $f_n'$ is an isomorphism near $\eta_F$ by \cite[Lemma~3.3.24]{Liuqing}.
    Let $F_n$ be the closure of $f_n(\eta_F)$. 
    Then by \cite[\S 2.42]{Kol_singularities_of_MMP}, $a(F_n, W, C)=0$, that is,
	$F_n$ is an lc place of $(W, C)$.

    \medskip
	
	\emph{Step~2}.
	Let $\pi\colon W'\to W$ be a toric log resolution of $W$, and let 
	$\phi_n\colon W_n\dashrightarrow W'$ be
	the induced birational map.  
    If $C'$ is the reduction of the complement of the torus of $W'$, then
	$K_{W'}+C' = \pi^*(K_W+C)$.
	Thus, the closure of $\phi_n(\eta_{F_n})$ in $W'$, 
	that is, $\centre_{W'} (F_n)$, is an lc centre of $(W', C')$, where
	$\eta_{F_n}$ is the generic point of $F_n$.  Then
	$\centre_{W'}(F_n)$ is an irreducible component of the intersections of irreducible components of $C'$
	as $(W', C')$ is log smooth.
	
	By the orbit-cone correspondence of toric varieties
	(see \cite[Theorem~3.2.6, Proposition~3.2.7]{CLS:toric}), 
	each irreducible component of the intersections of 
	prime divisors of $C'$ is a disjoint
	union of finitely many orbits of the torus action,
	and each of these irreducible components is a toric variety
	$V(\tau)=\overline {O(\tau)}$
	for some orbit $O(\tau)$, where $\tau$ is a face of some cone in the fan defining $(W', C')$.
	Pick $V(\tau)$ so that $\centre_{W'}(F_n)=V(\tau)$.
	By blowing up $W'$ along $V(\tau)$, we get a toric morphism $b\colon B_{V(\tau)} (W')\to W'$.
	Then $(B_{V(\tau)}(W'), \Supp b^*C')$ is also log smooth and $\centre_{B_{V(\tau)} (W')} (F_n)$
	is a log canonical centre of the pair $(B_{V(\tau)}(W'), \Supp b^*C')$.
	We can blow up $\centre_{B_{V(\tau)}(W')} (F_n)$ further, and inductively the centre of $F_n$
	on some blow-up $\wt{W}\to W'$ will be a divisor generically isomorphic to $F_n$ 
    by \cite[Lemma~2.22]{Kol_singularities_of_MMP}.
	Since each step of the blow-ups is a toric morphism, we have that $\wt{W}\to W'$ 
	is a toric morphism and $\wt{F}\coloneqq\centre_{\wt{W}} (F_n)$ is a 
	toric variety as a torus-invariant divisor on $\wt{W}$.
	
	Denote by $V$ the image of $V(\tau)$ on $W$, that is, $V=\centre_W(F_n)$,
	which is also the closure of a torus orbit on $W$.
	Denote by $(N, \Sigma)$ the fan defining the toric variety $(W, C)$, 
    where $N$ is a lattice and $\Sigma$ is a fan in $N_{\R}$; similarly,
    denote by $(\wt{N}, \wt{\Sigma})$ the fan defining $(\wt{W}, \wt{C})$,
    where $\wt{C}$ is the reduction of the complement of the torus of $\wt{W}$.
    Let $\varphi\colon (\wt{N}, \wt{\Sigma})\to (N,\Sigma)$ be the morphism of
    lattices and fans corresponding to the toric morphism $\wt{W}\to W$.
    Then by construction, $\varphi$ is the identity morphism on lattices and $\wt{\Sigma}$ is a refinement of $\Sigma$.
    In particular, $\wt{\Sigma}$ is obtained from $\Sigma$ by a sequence of
    star subdivisions; see \cite[Theorem~11.1.9, Theorem~11.2.2]{CLS:toric}
    and \cite[\S 3.3]{CLS:toric}.
    
    Denote by $\wt{\sigma}\in \wt{\Sigma}$ the ray (respectively, $\sigma\in \Sigma$ the cone) 
    such that $\wt{F} = V(\wt{\sigma}) = \overline{O(\wt{\sigma})}$ 
    (respectively, $V = V(\sigma) = \overline{O(\sigma)}$).
    Denote by $N_{\sigma}$ the sublattice of $N$ generated as a subgroup by $\sigma\cap N$,
    and similarly, $\wt{N}_{\wt{\sigma}}$ for $\wt{\sigma}\in \wt{\Sigma}$.
    Set $N(\sigma) \coloneqq N/N_{\sigma}$ and $\wt{N}(\wt{\sigma}) \coloneqq \wt{N}/ \wt{N}_{\wt{\sigma}}$.
    Then the morphism of torus orbits
    \[ \pi_{\sigma}^{\wt{\sigma}}\colon O(\wt{\sigma}) \to O(\sigma) \]
    is induced by the surjective morphism of lattices
    \[ \overline{\varphi}_{\sigma}^{\wt{\sigma}}\colon \wt{N}(\wt{\sigma}) \to N(\sigma). \]
    Thus, the fibre $F^{\wt{\sigma}}_{\sigma}$ of $\pi_{\sigma}^{\wt{\sigma}}$
    over a closed point in $O(\sigma)$ is isomorphic to the torus 
    $\mathbb{T}_{\varphi^{-1}(N_{\sigma})/\wt{N}_{\wt{\sigma}}}$;
    see \cite[page~464]{toric-morphism}.
    Note that the fibre of $\pi^{\wt{\sigma}}_{\sigma}$ over a closed point of $O(\sigma)$
    depends only on the orbit $O(\sigma)$; see \cite[Proposition~2.1.4]{toric-morphism}.

    By generic flatness, a general closed fibre of $\wt{F}\to V$ is pure-dimensional 
    of dimension $\dim \wt{F} - \dim V$, which is equal to $\dim O(\wt{\sigma}) - \dim O(\sigma)$.
    Let $\wt{F}_c$ be a general closed fibre of $\wt{F}\to V$, where we can 
    assume that $c\in O(\sigma)$.  
    As $\dim \big(\wt{F}_c\cap (\wt{F}\setminus O(\wt{\sigma}))\big) < \dim \wt{F}_c$,
    there is a one-to-one correspondence 
    between irreducible components of $\wt{F}_c$ and irreducible components of the open subset
    $O(\wt{\sigma})_c\cong F^{\wt{\sigma}}_{\sigma}\subset \wt{F}_c$.
    Therefore, a general closed fibre of $\wt{F}\to V$ is irreducible and rational.

    \medskip
    
    \emph{Step~3}.
    Since $F_n\dashrightarrow \wt{F}$ is a birational map of nonsingular varieties over $V$,
    a general closed fibre of $F_n\to V$ is also irreducible and rational.
    Denote by $F'$ the closure of $f_n'(\eta_F)$, which is a prime divisor on $X\times_W W_n$; see Step~1.
    View $(\centre_X F)\times_W F_n$ as a closed subscheme of $X\times_W W_n$.
    As a general closed fibre of $F_n\to V$ is irreducible, $F'$ is the only irreducible component
    of $(\centre_X F)\times_W F_n$ that dominates $\centre_X F$.  
    Thus, a general closed fibre of $F'\to \centre_X F$ is isomorphic to a general closed fibre of $F_n\to V$.
    As $F\dashrightarrow F'$ is a birational map over $\centre_X F$, 
    by generic flatness, we can conclude that a general closed
    fibre of $F\to \centre_X F$ is irreducible and rational.
\end{proof}


\subsection{Toroidal morphisms}\label{toroidal-morphisms}

Now let $(X,D)$ and $(Y,E)$ be couples, and let $f\colon X\to Y$ be a morphism of varieties. 
Let $x\in X$ be a closed point 
and $y=f(x)$. We say $(X,D)\to (Y,E)$ 
is a \emph{toroidal morphism at $x$} if there exist local 
toric models $\{(W,C),w \}$ and $\{(V,B),v\}$ of 
$\{(X,D), x\}$ and $\{(Y,E), y\}$ respectively, and a toric morphism $g\colon W\to V$ of normal affine toric varieties 
so that we have a commutative diagram 
\[\xymatrix{
\widehat{\mathcal{O}}_{{X},{x}}\ar[r] & \widehat{\mathcal{O}}_{{W},w}\\
\widehat{\mathcal{O}}_{{Y},{y}} \ar[u] \ar[r] & \widehat{\mathcal{O}}_{{V},v} \ar[u]
}\]
where the vertical maps are induced by the given morphisms $f$ and $g$, and the horizontal maps are 
isomorphisms induced by the local toric models. We say the morphism
$f\colon (X, D) \to (Y, E)$ is \emph{toroidal} 
if it is toroidal at every closed point of $X$. 
Equivalently, we call the corresponding morphism 
$f\colon (U_X\subset X) \to (U_Y\subset Y)$
a \emph{toroidal morphism of toroidal embeddings}; cf. \S \ref{toroidal-couples-defn}.

By taking local toric models, it is easy to see the following result.

\begin{lem}\label{add-ver-to-toroidal}
    Let $f\colon (U_X\subset X)\to (U_Y\subset Y)$ be a toroidal morphism of toroidal embeddings.
    Then $f^{-1}(Y\setminus U_Y)$ is contained in the toroidal boundary $X\setminus U_X$.
\end{lem}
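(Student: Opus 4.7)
The plan is to work locally at closed points and reduce the lemma to the corresponding statement for a toric morphism of toric varieties. Setting $D := X \setminus U_X$ and $E := Y \setminus U_Y$, the goal is $f^{-1}(E) \subseteq D$. Both sets are closed in $X$, so since $X$ is a variety over an algebraically closed field, it suffices to verify the containment on closed points. The first step is thus to fix a closed point $x \in X$ with $y := f(x) \in E$ and show $x \in D$.

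Since $f$ is toroidal at $x$, we obtain local toric models $\{(W,C),w\}$ and $\{(V,B),v\}$ of $\{(X,D),x\}$ and $\{(Y,E),y\}$ respectively, together with a toric morphism $g\colon W \to V$ satisfying $g(w)=v$, fitting into the commutative diagram of completions from \S \ref{toroidal-morphisms}; the horizontal isomorphisms there send the ideal of $D$ (resp. $E$) to the ideal of $C$ (resp. $B$). The key toric fact to invoke is $g^{-1}(B) \subseteq C$: the toric morphism $g$ is induced by a lattice map which in turn induces a group homomorphism $\mathbb{T}_W \to \mathbb{T}_V$, so $g(\mathbb{T}_W) \subseteq \mathbb{T}_V$, and taking complements yields the inclusion. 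Consequently $v \in B$ will force $w \in C$.

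The remaining step is to transfer set-theoretic membership across the completion isomorphisms. The hypothesis $y \in E$ says the stalk ideal of $E$ at $y$ is a proper ideal of $\mathcal{O}_{Y,y}$, hence its image in $\widehat{\mathcal{O}}_{Y,y}$ is proper by faithful flatness of completion; under the given isomorphism $\widehat{\mathcal{O}}_{Y,y} \cong \widehat{\mathcal{O}}_{V,v}$ this image is the ideal of $B$ in $\widehat{\mathcal{O}}_{V,v}$, which is therefore proper, so $v \in B$. By the toric fact we get $w \in C$; running the identical dictionary backward through $\widehat{\mathcal{O}}_{X,x} \cong \widehat{\mathcal{O}}_{W,w}$ translates $w \in C$ into $x \in D$, finishing the proof. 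I do not expect any real obstacle; the only point requiring a little care is this back-and-forth translation between the geometric condition of lying on the toroidal boundary and the algebraic condition that the boundary ideal be proper in the local ring, stable under the completion isomorphisms provided by the local toric models.
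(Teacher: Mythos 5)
Your proof is correct and takes precisely the approach the paper indicates but leaves unwritten: the paper's ``proof'' is the single sentence ``By taking local toric models, it is easy to see the following result.'' Your argument---reducing to closed points via the Jacobson property, noting that the toric morphism $g$ sends $\mathbb{T}_W$ into $\mathbb{T}_V$ so that $g^{-1}(B)\subseteq C$ and $g(w)=v$ forces $w\in C$, and transferring set-theoretic membership across the completion isomorphisms via faithful flatness of $\mathcal{O}_{Y,y}\to\widehat{\mathcal{O}}_{Y,y}$---is exactly the intended local-toric-model reasoning, spelled out in full.
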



\subsection{Toroidal varieties and logarithmic geometry}

As there are no other applications of logarithmic geometry in this paper except in the proof of
Theorem~\ref{sat-base-change-intro}, 
we refer the readers to \cite{gabber2018foundationsringtheory, Kato89, Kato-toric-sing, Ogus-log-geo}
for details about logarithmic geometry.
For more details regarding the relation between toroidal geometry and logarithmic geometry,
see \cite{Q-log-geometry}.

All the log-structures in this paper are defined on Zariski sites.
A \emph{log-scheme} $(X, \mc{M})$ consists of a scheme $X$ and a \emph{log-structure} $\alpha\colon \mc{M}\to \mc{O}_X$;
see \cite[\S 1]{Kato-toric-sing}.  A \emph{log-variety} is a log-scheme whose underlying scheme is a variety.
To avoid confusion, when saying \emph{logarithmically smooth log-morphisms}, 
we mean logarithmically smooth morphisms in the category of log-schemes defined in the sense of 
\cite{Kato89, Kato-toric-sing, Ogus-log-geo}.  However, when saying a log smooth morphism, 
we mean a log smooth morphism from a couple as in \S \ref{log-smooth}.
The applications of these terminologies should be clear from the context.

Let $X$ be a variety and $\iota\colon U\hookrightarrow X$ an inclusion of an open subset.
Denote by $\mc{M}_{U/X}$ the sheaf of monoids $\mc{O}_X\cap \iota_* \mc{O}_X^*$.
We call $\mc{M}_{U/X}$ the \emph{compactifying log-structure} associated to the embedding $(U\subset X)$;
cf. \cite[\S III.1.6]{Ogus-log-geo}.
The log-variety $(X, \mc{M}_{U/X})$ is \emph{logarithmically regular} 
(or \emph{log-regular} for short) if and only if 
$(U\subset X)$ is a \emph{strict} toroidal embedding; cf. \cite[(1.7)]{Kato-toric-sing}
and \cite{Q-log-geometry}.

Let $(U_X\subset X), (U_Y\subset Y)$ be \emph{strict} toroidal embeddings.
Let $(U_X\subset X)\to (U_Y\subset Y)$ be a morphism of embeddings, that is, 
a morphism $f\colon X\to Y$ of varieties such that $f(U_X) \subseteq U_Y$.
Then $(U_X\subset X)\to (U_Y\subset Y)$
is a \emph{dominant} toroidal morphism if and only if the induced log-morphism
$(X, \mc{M}_{U_X/X})\to (Y, \mc{M}_{U_Y/Y})$ of log-varieties is logarithmically smooth in the category of log-schemes;
see \cite{denef2013remarkstoroidalmorphisms, Q-log-geometry}.


\subsection{Relative toroidalisation}

First we recall a theorem proved by Abramovich and Karu in \cite{weak:s-stable:reduction}, which 
says that any projective, surjective morphism of varieties can be made 
logarithmically smooth after birational modifications.
This result is generalised to varieties over non-closed fields of characteristic zero in \cite{ADK13}.

\begin{thm}[\protect{\cite{ADK13, weak:s-stable:reduction}}]\label{ak00-thm}
	Let $f\colon X\to B$ be a projective, surjective morphism of varieties.
	Let $Z\subset X$ be a proper closed subset.  Then there exists a commutative diagram
	\[\xymatrix{
	    (U_{X'}\subset X')\ar[r]^-{m_X}\ar[d]^{f'} & X\ar[d]^{f} \\
	          (U_{B'}\subset B')\ar[r]^-{m_B} & B
	}\]
    where $X'$ and $B'$ are nonsingular varieties, and 
	$m_B$ and $m_X$ are projective birational morphisms, such that
	\begin{itemize}
        \item [\emph{(1)}] the inclusions on the left are strict toroidal embeddings;
		\item [\emph{(2)}] $f'$ is a projective toroidal morphism of toroidal embeddings;
		\item [\emph{(3)}] let $Z'=m_X^{-1}(Z)$, then $Z'$ is a simple normal crossings divisor, and $Z'\subset X'\setminus U_{X'}$;
        \item [\emph{(4)}] the restriction of the morphism $m_X$ to $U_{X'}$ is an open embedding.
	\end{itemize}
\end{thm}


We note that the method proving this theorem in \cite{ADK13, weak:s-stable:reduction} is non-canonical and even a smooth generic fibre of $X\to B$
can be modified by it.  In \cite{ATW20}, it is shown that there is a \emph{relatively canonical toroidalisation method} that keeps the toroidal locus of $f$ unchanged;
for more details, we refer the readers to \cite[\S 1]{ATW20}.
In the next result, we show that a normalised base change of a dominant toroidal morphism is 
also toroidal.  For more details and generalisation of this result to log-structures on small \'etale sites,
we refer the readers to \cite{Q-log-geometry}.


\begin{thm}\label{sat-base-change-intro}
	Let $(U_X\subset X)$, $(U_Y\subset Y)$ and $(U_Z\subset Z)$ be strict toroidal embeddings.  Let 
    \[ f\colon (U_Y\subset Y)\to (U_Z\subset Z)~~\mbox{and}~~g\colon (U_X\subset X)\to (U_Z\subset Z) \]
	be morphisms of embeddings, that is, $f,g$ are morphisms of varieties such that 
    $f(U_Y)\subseteq U_Z$ and $g(U_X)\subseteq U_Z$.  Assume that 
	$g$ is a dominant toroidal morphism of toroidal embeddings.

    Denote by $W$ the normalisation of an irreducible
	component of $Y\times_Z X$ that dominates $Y$, and by $p\colon W\to Y$ and $q\colon W\to X$
	the induced projection morphisms respectively.
    \[\xymatrix{
	(U_W\subset W)\ar[r]^-q\ar[d]^p & (U_X\subset X)\ar[d]^g \\
	(U_Y\subset Y)\ar[r]^-f & (U_Z\subset Z)
	}\]
	Let 
    \[ U_W \coloneqq p^{-1}(U_Y)\cap q^{-1}(U_X). \]
    Then $U_W$ is a nonempty open subset of $W$, and $(U_W\subset W)$ is also a strict toroidal embedding.
    Moreover, the induced morphism of embeddings
    \[ p\colon (U_W\subset W)\to (U_Y\subset Y) \]
    is a dominant toroidal morphism of toroidal embeddings.
\end{thm}

\begin{proof}
    \emph{Step~1}.
    We also denote by 
    \[ f\colon (Y, \mc{M}_{U_Y/Y})\to (Z, \mc{M}_{U_Z/Z})~~\mbox{and}~~
    g\colon (X, \mc{M}_{U_X/X}) \to (Z, \mc{M}_{U_Z/Z}) \]
    the induced log-morphism of log-regular log-varieties,
    which should not lead to any confusion from the context.  Denote by 
    \[ \mathbf{\Omega} \coloneqq (\Omega, \mc{M}_{\Omega})~~ \mbox{and}~~
    \mathbf{\Omega}^{\fs} \coloneqq (\Omega^{\fs}, \mc{M}_{\Omega^{\fs}}) \] 
    the fibre product of $(Y, \mc{M}_{U_Y/Y})$ and $(X, \mc{M}_{U_X/X})$
    along $(Z, \mc{M}_{U_Z/Z})$ in the category of log-schemes and fs log-schemes respectively.
    By \cite[III.2.1.2]{Ogus-log-geo}, the underlying scheme $\Omega$ is isomorphic to $Y\times_Z X$, 
    and $\mc{M}_{\Omega}$ is a coherent log-structure on $\Omega$.
    
    By \cite[Remark~12.2.36~(ii)]{gabber2018foundationsringtheory}, the morphism of schemes
    $\Omega^{\fs} \to \Omega$ is a finite surjective morphism.
    By \cite[Proposition~12.3.34]{gabber2018foundationsringtheory} and 
    the proof of \cite[Proposition~12.2.35]{gabber2018foundationsringtheory},
    \[ \mathbf{\Omega}^{\fs}\to \mathbf{\Omega}\]
    is log-\'etale.  As logarithmic smoothness is stable under composition and base change
    (see \cite[Proposition~12.3.24]{gabber2018foundationsringtheory}),
    the induced log-morphism of log-schemes 
    \[ \mathbf{\Omega}^{\fs}\to (Y, \mc{M}_{U_Y/Y}) \] 
    is logarithmically smooth.  
    Then by \cite[(8.2)]{Kato-toric-sing},
    the log-scheme $\mathbf{\Omega}^{\fs}$ is log-regular.
    By \cite[Corollary~12.5.29]{gabber2018foundationsringtheory}, $\Omega^{\fs}$
    is a normal scheme, hence $\Omega^{\fs}$ is a disjoint union 
    of normal varieties.  Thus, every irreducible component
    of $\Omega^{\fs}$ is a strict toroidal variety. 

    Let $U$ be the fibre product $U_Y\times_{U_Z} U_X$.
    By \cite[Lemma~6.2]{Q-log-geometry}, $U$ is nonempty.
    Let $(U, \mc{M}_U)$ be the log-scheme endowed with trivial log-structure.
    It is evident that $\mc{M}_U$ is the restriction of $\mc{M}_{\Omega}$ to $U$.
    Denote by $\Omega^{\fs,*}$ the maximal open subset of $\Omega^{\fs}$ on which $\mc{M}_{\Omega^{\fs}}$
    is trivial.
    Then by the construction in \cite[Proposition~12.2.35]{gabber2018foundationsringtheory},
    $U$ is also an open subscheme of $\Omega^{\fs}$ satisfying $\mc{M}_U = \mc{M}_{\Omega^{\fs}}|_U$, 
    whence $U\subseteq \Omega^{\fs,*}$.  Moreover, 
    by \cite[Corollary~12.3.27]{gabber2018foundationsringtheory}, 
    the scheme $U$ is nonsingular in the usual sense.

    \medskip

    \emph{Step~2}.
    Let $W'$ be an irreducible component of $Y\times_Z X$ that dominates $Y$.
    By \cite[Lemma~6.2]{Q-log-geometry}, the inverse image of $U_X$ in $W'$ is a dense open subset of $W'$.
    Let $W$ be the normalisation of $W'$, and let $p\colon W\to Y$
    and $q\colon W\to X$ be the projections. 
    Then $p^{-1}(U_Y)\cap q^{-1}(U_X)$ is a nonempty dense open subset of $W$. 
    Since $U$ is a nonsingular scheme, there is a unique irreducible component $U_{W'}$ of $U$
    contained in $W'$.
    Then the normalisation $W\to W'$ is an isomorphism over $U_{W'}$.
    Denote by $U_W$ the inverse image of $U_{W'}$ in $W$.
    Then $U_W = p^{-1}(U_Y)\cap q^{-1}(U_X)$.
    Let $W''$ be the irreducible component of $\Omega^{\fs}$ that contains $U_{W'}$.
    As $U_{W'}$ embeds into both $W'$ and $W''$,
    the finite morphism $W''\to W'$ is also birational.  
    Since $\Omega^{\fs}$ is a normal scheme, we see that $W''\cong W$.

    Denote by $W^*\subseteq W$ the restriction of $\Omega^{\fs, *}$ to $W$.
    By \cite[Remark~12.2.8]{gabber2018foundationsringtheory}, $W^*$ is mapped into $U_Y, U_X$ 
    via $p\colon W\to Y, q\colon W\to X$ respectively.  Then 
    \[ U_W\subseteq W^* \subseteq p^{-1}(U_Y)\cap q^{-1}(U_X) = U_W. \]
    Thus, $U_W = W^*$.
    Then $(U_W\subset W)$ is a strict toroidal embedding.
    Moreover, since $\mathbf{\Omega}^{\fs} \to (Y, \mc{M}_{U_Y/Y})$ is logarithmically smooth, 
    we can conclude that $p\colon (U_W\subset W)\to (U_Y\subset Y)$ is a dominant toroidal
    morphism of toroidal embeddings.
\end{proof}


The following result is crucial in the proof of Theorem~\ref{bnd-irra}.

\begin{thm}[cf. \protect{\cite[Theorem~1.2.12]{ATW20}}]\label{functorial-toroidalization}
	Let $\Phi \colon \mf{X} \to \mf{B}$ be a projective, 
	surjective morphism of varieties with geometrically integral generic fibre.  
	Let $\mf{Z}_{\mf{X}}\subset \mf{X}$ and $\mf{Z}_{\mf{B}}\subset \mf{B}$ be proper closed subsets respectively.	
	Then there exists a 
	toroidal morphism $\Phi'$ such that:
	\begin{itemize}
		\item [\emph{(i)}] \emph{[Existence]} There is a commutative diagram as follows:
	          \[\xymatrix{
	          (U_{\mf{X}'}\subset \mf{X}')\ar[r]^-{m_{\mf{X}}}\ar[d]^{\Phi'} &\mf{X}\ar[d]^{\Phi} \\
	          (U_{\mf{B}'}\subset \mf{B}')\ar[r]^-{m_{\mf{B}}} & \mf{B}
	          }\]
	          such that $m_{\mf{B}}$ and $m_{\mf{X}}$ are projective birational morphisms, 
	          the inclusions on the left are strict toroidal embeddings, 
	          $m_{\mf{X}}^{-1}(\mf{Z}_{\mf{X}})$ is contained in $\mf{X}'\setminus U_{\mf{X}'}$, 
	          $m_{\mf{B}}^{-1}(\mf{Z}_{\mf{B}})$ is contained in $\mf{B}'\setminus U_{\mf{B}'}$,
	          and $\Phi'$ is a surjective, projective toroidal morphism.
	     \item [\emph{(ii)}] \emph{[Base change functoriality]} 
	         There is a nonempty open subset $U_{\mf{B}}\subset \mf{B}\setminus \mf{Z}_{\mf{B}}$
	         satisfying the following property.
	         Let $i\colon C\to \mf{B}$ be a morphism from a smooth curve.  
	         Assume that the image $i(C)$ is not entirely contained in the 
	         closed subset $\mf{B}\setminus U_{\mf{B}}$.  
             Then $\mf{X}\times_{\mf{B}} C$ admits a unique irreducible component dominating $C$.
	         Denote by $Y$ the reduction of this irreducible component of $\mf{X}\times_{\mf{B}} C$ 
	         with projection morphism $f\colon Y\to C$.  Consider the diagram arising from 
	         normalised base change as follows:
	         \[\xymatrix{
	         (U_{Y'}\subset Y')\ar[ddd]^{f'}\ar[rrr]^{m_Y}\ar[rd] & & & Y\ar[ddd]^f\ar[ld] \\
	           & (U_{\mf{X}'}\subset \mf{X}')\ar[r]^-{m_{\mf{X}}}\ar[d]^{\Phi'} &\mf{X}\ar[d]^{\Phi} & \\
	           & (U_{\mf{B}'}\subset \mf{B}')\ar[r]^-{m_{\mf{B}}} & \mf{B} & \\
	         (U_{C'}\subset C')\ar[rrr]^{m_C}\ar[ru] & & & C\ar[lu]_i
	         }\]
	         where $C'$ (respectively, $Y'$) is 
	         the normalisation of the main component of $\mf{B}'\times_{\mf{B}} C$  
	         (respectively, of $\mf{X}'\times_{\mf{B}'} C'$), and
	         $U_{C'}$ (respectively, $U_{Y'}$) is the inverse image of $U_{\mf{B}'}$ in $C'$
	         (respectively, of $U_{\mf{X}'}$ in $Y'$).
	         Then $U_{C'}, U_{Y'}$ are nonempty open subsets of $C', Y'$ respectively,
             $f'$ is also a toroidal morphism of strict toroidal embeddings, and all the properties 
	         in \emph{(i)} are satisfied.
	         In particular, $m_C$ and $m_Y$ are projective birational morphisms, and
	         the inverse image of $\mf{Z}_{\mf{B}}$ in $C'$ (respectively, of 
	         $\mf{Z}_{\mf{X}}$ in $Y'$) is contained in $C'\setminus U_{C'}$ (respectively, in $Y'\setminus U_{Y'}$).
	\end{itemize}
\end{thm}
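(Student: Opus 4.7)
The plan for part (i) is to invoke Theorem~\ref{ak00-thm} (Abramovich--Karu) applied to $\Phi\colon \mf{X}\to \mf{B}$ with the proper closed subset $Z:=\mf{Z}_{\mf{X}}\cup \Phi^{-1}(\mf{Z}_{\mf{B}})$. This immediately provides a toroidal morphism $\Phi'\colon (U_{\mf{X}'}\subset \mf{X}')\to (U_{\mf{B}'}\subset \mf{B}')$ with $m_{\mf{X}}^{-1}(Z)\subset \mf{X}'\setminus U_{\mf{X}'}$, so in particular $m_{\mf{X}}^{-1}(\mf{Z}_{\mf{X}})$ sits in the toroidal boundary. For the analogous inclusion on the base, I would use the identity $(\Phi')^{-1}(m_{\mf{B}}^{-1}(\mf{Z}_{\mf{B}}))=m_{\mf{X}}^{-1}(\Phi^{-1}(\mf{Z}_{\mf{B}}))\subset \mf{X}'\setminus U_{\mf{X}'}$ together with the fact that $\Phi'|_{U_{\mf{X}'}}\colon U_{\mf{X}'}\to U_{\mf{B}'}$ is smooth and surjective; any point of $U_{\mf{B}'}$ thus has its fibre meeting $U_{\mf{X}'}$, which forces $m_{\mf{B}}^{-1}(\mf{Z}_{\mf{B}})\cap U_{\mf{B}'}=\emptyset$.

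For part (ii), I would carry out the construction in (i) by the relatively canonical toroidalisation method of \cite{ATW20}, which keeps unchanged the locus where $\Phi$ is already toroidal. This permits a choice of nonempty open $U_{\mf{B}}\subset \mf{B}\setminus \mf{Z}_{\mf{B}}$ over which $m_{\mf{B}}$ is an isomorphism and $\Phi$ agrees with $\Phi'$ for the transferred toroidal structure. Given a morphism $i\colon C\to \mf{B}$ with $i(C)\not\subset \mf{B}\setminus U_{\mf{B}}$, the curve $C$ meets the locus where $m_{\mf{B}}^{-1}$ is regular, so the main component of $\mf{B}'\times_{\mf{B}} C$ dominates $C$, and its normalisation $C'$ is a smooth curve with $m_C\colon C'\to C$ projective birational. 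Equipping $C'$ with the divisorial log structure corresponding to $U_{C'}\subset C'$, which by definition is the preimage of $U_{\mf{B}'}$, the induced morphism $C'\to \mf{B}'$ becomes a log morphism.

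Next, I would apply Definition-Lemma~\ref{sat-base-change} to the toroidal morphism $\Phi'\colon (U_{\mf{X}'}\subset \mf{X}')\to (U_{\mf{B}'}\subset \mf{B}')$ and the log morphism $C'\to \mf{B}'$. The $W$ produced there is precisely the normalisation of the main component of $\mf{X}'\times_{\mf{B}'} C'$, i.e.\ $Y'$, and the projection $p\colon Y'\to C'$, which is $f'$, is a toroidal morphism of toroidal embeddings. Birationality of $m_Y\colon Y'\to Y$ can be verified at the generic point using that $\mf{X}'\to \mf{X}$, $\mf{B}'\to \mf{B}$, and $C'\to C$ are all birational, so the main component of $\mf{X}'\times_{\mf{B}'} C'$ dominates the main component of $\mf{X}\times_{\mf{B}} C$ birationally. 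The remaining boundary containments for $C'$ and $Y'$ then follow from part (i) combined with Lemma~\ref{add-ver-to-toroidal} applied to $f'$.

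The main obstacle is part (ii), and has two layers. First, the choice of $U_{\mf{B}}$ must reconcile three requirements: avoiding $\mf{Z}_{\mf{B}}$, avoiding the exceptional locus of $m_{\mf{B}}$ so that the strict transform to $\mf{B}'$ is defined, and lying in a locus where the toroidalisation is \emph{controlled} under base change, which is what the relatively canonical method of \cite{ATW20} provides. Second, one must identify the $Y'$ appearing in the theorem (the normalisation of a main component of an ordinary fibre product of schemes) with the saturated fibre product of log schemes used in Definition-Lemma~\ref{sat-base-change}, so as to legitimately transport the toroidality conclusion of that lemma. Both points are essentially log-geometric bookkeeping once the two underlying results are in hand, but they demand care with the compatibility of the divisorial log structures.
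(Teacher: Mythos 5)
Your proposal is correct in outline but diverges from the paper's proof in two substantive places. For part (i), you apply Theorem~\ref{ak00-thm} once to $Z=\mf{Z}_{\mf{X}}\cup\Phi^{-1}(\mf{Z}_{\mf{B}})$ and then deduce $m_{\mf{B}}^{-1}(\mf{Z}_{\mf{B}})\subset \mf{B}'\setminus U_{\mf{B}'}$ from the assertion that $\Phi'|_{U_{\mf{X}'}}\colon U_{\mf{X}'}\to U_{\mf{B}'}$ is \emph{surjective}. The paper instead applies Theorem~\ref{ak00-thm} only to $\mf{Z}_{\mf{X}}$, then performs a separate log resolution of the base to absorb $\mf{Z}_{\mf{B}}$ into the boundary, and then applies Definition-Lemma~\ref{sat-base-change} to pull the toroidal morphism back over this new base. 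Your shortcut is cleaner if it works, but the surjectivity of $\Phi'$ on the open parts is stated without justification; it is true (one can argue \'etale-locally that the fibre over a point of the big torus of $\mf{B}'$ is contained in the preimage of that torus, which is an open toric subvariety whose big-torus part is dense in every fibre), but it is not an off-the-shelf fact and deserves an argument, whereas the paper's route avoids needing it entirely.

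For part (ii), you invoke the relatively canonical toroidalisation of \cite{ATW20} so that $U_{\mf{B}}$ can be chosen where $m_{\mf{B}}$ is an isomorphism and $\Phi$ agrees with $\Phi'$. The paper does not use \cite{ATW20} in this proof: it simply sets $U_{\mf{B}}=\mf{B}\setminus m_{\mf{B}}(\mf{B}'\setminus U_{\mf{B}'})$, a non-empty open set automatically contained in $\mf{B}\setminus\mf{Z}_{\mf{B}}$, and all the ``control under base change'' you are worried about comes from Definition-Lemma~\ref{sat-base-change} alone. Your three requirements on $U_{\mf{B}}$ are met by this elementary choice (one may further shrink $U_{\mf{B}}$ to avoid the image of the $m_{\mf{B}}$-exceptional locus, a proper closed subset, so $C'\to C$ is birational), so \cite{ATW20} is superfluous; worse, if you were to replace Theorem~\ref{ak00-thm} by the \cite{ATW20} construction in Step 1, you would have to re-verify that it delivers the property $m_{\mf{X}}^{-1}(Z)\subset\mf{X}'\setminus U_{\mf{X}'}$, which the paper only states for Theorem~\ref{ak00-thm}. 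Your second stated worry, identifying $Y'$ with the saturated fibre product, is already handled in the paper: Definition-Lemma~\ref{sat-base-change} is phrased directly in terms of the normalisation of the main component, and its proof establishes the identification with the fs fibre product. In summary, your argument for (ii) is essentially the paper's once the ATW20 detour is removed, while your argument for (i) is a genuinely different (and potentially shorter) route but with an unproven surjectivity step that the paper circumvents by design.
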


\begin{proof}
	\emph{Step~1}.
	To include the closed subset $\mf{Z}_{\mf{X}}$ into the toroidal structure, we first 
	modify $\Phi\colon \mf{X}\to \mf{B}$ by Theorem~\ref{ak00-thm} as in the following commutative diagram
	\[\xymatrix{
	(U_{\mf{X}^{\circ}}\subset \mf{X}^{\circ})\ar[r]^-{m_{\mf{X}^{\circ}}}\ar[d]^{\Phi^{\circ}} & \mf{X}\ar[d]^{\Phi} \\
	(U_{\mf{B}^{\circ}}\subset \mf{B}^{\circ})\ar[r]^-{m_{\mf{B}^{\circ}}} & \mf{B}
	}\]
	where $m_{\mf{X}^{\circ}}$ and $m_{\mf{B}^{\circ}}$ are projective birational morphisms,
	the inclusions on the left are strict toroidal embeddings, $m_{\mf{X}^{\circ}}^{-1}(\mf{Z}_{\mf{X}})$
	is contained in the toroidal boundary $\mf{X}^{\circ}\setminus U_{\mf{X}^{\circ}}$,
	and $\Phi^{\circ}$ is a projective, surjective toroidal morphism of strict toroidal embeddings.

    \medskip
	
	\emph{Step~2}. 
	Now we include the closed subset $\mf{Z}_{\mf{B}}$ into the toroidal boundary by an additional 
	modification on $\mf{B}^{\circ}$.  Consider the closed subset 
	$\mf{Z}^{\circ}\subset \mf{B}^{\circ}$ that is
	the union of the inverse image of 
	$\mf{Z}_{\mf{B}}$ in $\mf{B}^{\circ}$ and the toroidal boundary 
	$\mf{B}^{\circ}\setminus U_{\mf{B}^{\circ}}$.
	Let $m_{\mf{B}'}\colon\mf{B}'\to \mf{B}^{\circ}$ be a log resolution of 
	$(\mf{B}^{\circ}, \mf{Z}^{\circ})$
	such that $\mf{B}'$ is smooth and the inverse image 
	of $\mf{Z}^{\circ}$ in $\mf{B}'$ is a simple normal crossing divisor.
	Set $U_{\mf{B}'}$ as the open complement $\mf{B}'\setminus m_{\mf{B}'}^{-1}(\mf{Z}^{\circ})$.
	Then $(U_{\mf{B}'}\subset \mf{B}')$ is a strict toroidal embedding and 
	\[m_{\mf{B}'}\colon (U_{\mf{B}'}\subset \mf{B}') \to (U_{\mf{B}^{\circ}}\subset \mf{B}^{\circ}) \]
    is a morphism of embeddings.
    
	Taking the normalised base change $\Phi'$ of the toroidal morphism $\Phi^{\circ}$
	(see Theorem~\ref{sat-base-change-intro}), we have
	a commutative diagram
	\[\xymatrix{
	(U_{\mf{X}'}\subset \mf{X}')\ar[r]^-{m_{\mf{X}'}}\ar[d]^{\Phi'} & (U_{\mf{X}^{\circ}}\subset \mf{X}^{\circ})\ar[r]^-{m_{\mf{X}^{\circ}}}\ar[d]^{\Phi^{\circ}} & \mf{X}\ar[d]^{\Phi} \\
	(U_{\mf{B}'}\subset \mf{B}')\ar[r]^-{m_{\mf{B}'}} & (U_{\mf{B}^{\circ}}\subset \mf{B}^{\circ})\ar[r]^-{m_{\mf{B}^{\circ}}} & \mf{B}
	}\]
	where $\mf{X}'$ is the normalisation of the main component of $\mf{B}'\times_{\mf{B}^{\circ}} \mf{X}^{\circ}$,
    $m_{\mf{X}'}$ and $m_{\mf{B}'}$ are projective birational morphisms, 
	$(U_{\mf{X}'}\subset \mf{X}')$ is also a strict toroidal embedding, the open subset 
	$U_{\mf{X}'} \subset \mf{X}'$ is the intersection of the open subsets $(\Phi')^{-1}(U_{\mf{B}'})$ and $m_{\mf{X}'}^{-1}(U_{\mf{X}^{\circ}})$,
    and $\Phi'$ is a surjective, projective toroidal morphism.
	
	Denote by $m_{\mf{X}}$ and $m_{\mf{B}}$ the composite projective birational morphisms
	\[ m_{\mf{X}^{\circ}}\circ m_{\mf{X}'}~~ \mbox{and}~~ m_{\mf{B}^{\circ}} \circ m_{\mf{B}'}.\]
	By construction, $m_{\mf{X}}^{-1}(\mf{Z}_{\mf{X}})$ (respectively, $m_{\mf{B}}^{-1}(\mf{Z}_{\mf{B}})$) 
	is contained in $\mf{X}'\setminus U_{\mf{X}'}$ (respectively, in $\mf{B}'\setminus U_{\mf{B}'}$).  This proves (i).

    \medskip
	
	\emph{Step~3}.
	Let $V_{\mf{B}}\subset \mf{B}$ be the smallest closed subset such that
    \begin{itemize}
        \item $V_{\mf{B}}$ contains the image of the toroidal boundary $\mf{B}'\setminus U_{\mf{B}'}$,
        \item $m_{\mf{B}}$ is an isomorphism over $\mf{B}\setminus V_{\mf{B}}$, 
        \item every closed fibre of $\Phi$ over $\mf{B}\setminus V_{\mf{B}}$ is integral, and
        \item for every closed point $b\in \mf{B}\setminus V_{\mf{B}}$, let $b'$ be its inverse image in $\mf{B}'$, then $m_{\mf{X}}$ induces a birational morphism $\mf{X}'_{b'}\to \mf{X}_b$ on the fibres 
        (in particular, $\mf{X}'_{b'}$ is irreducible).
    \end{itemize}
	It is evident that $U_{\mf{B}} \coloneqq \mf{B}\setminus V_{\mf{B}}$ is a nonempty open subset of $\mf{B}$.
	Moreover, by construction, $U_{\mf{B}}$ is contained in $\mf{B}\setminus \mf{Z}_{\mf{B}}$
	as the closed subset $\mf{Z}_{\mf{B}}$ is contained in $V_{\mf{B}}$.

    Let $i\colon C\to \mf{B}$ be a morphism from a smooth curve such that the image $i(C)$
    is not entirely contained in $V_{\mf{B}}$.  As $m_{\mf{B}}$ is an isomorphism over $U_{\mf{B}}$,
    $\mf{B}'\times_{\mf{B}} C$ admits a unique irreducible component that is birational to $C$.
    Denote by $C'$ the normalisation of this irreducible component, then the induced morphism
    $C'\to C$ is an isomorphism of nonsingular curves.  Let $U_{C'}$ be the inverse image of $U_{\mf{B}'}$
    in $C'$, which is nonempty as $V_{\mf{B}}$ contains the image of $\mf{B}'\setminus U_{\mf{B}'}$.  
    It is evident that $(U_{C'}\subset C')$ is a strict toroidal embedding.
    Note that $U_{C'}$ may be equal to the whole $C'$.
    
    By the construction of $U_{\mf{B}}$, a general closed fibre of $\mf{X}\times_{\mf{B}} C\to C$ is irreducible,
    hence there is a unique irreducible component $X$ of $\mf{X}\times_{\mf{B}} C$
    that dominates $C$.  For the same reason, $\mf{X}'\times_{\mf{B}'} C'$ admits a unique
    irreducible component $X'$ that dominates $C'$.  Moreover, $m_{\mf{X}}$ induces 
    a birational morphism $X'\to X$.
    By \cite[Chap.~I, (5.1.5)]{EGA-I}, there is an induced birational morphism 
    $X'_{\red} \to X_{\red}$.  Let $Y\coloneqq X_{\red}$, and let $Y'$ be the normalisation of $X'_{\red}$.
    Then it is evident that the induced morphism $m_Y\colon Y'\to Y$ is projective and birational.

    Let $f'$ be the morphism $Y'\to C'$.
    Let $U_{Y'}$ be the inverse image of $U_{\mf{X}'}$ in $Y'$.
    Note that $U_{Y'}\cap (f')^{-1}(U_{C'})$ is equal to $U_{Y'}$ as the inverse image of $U_{\mf{B}'}$
    in $\mf{X'}$ contains $U_{\mf{X}'}$; see Lemma~\ref{add-ver-to-toroidal}.
    Applying Theorem~\ref{sat-base-change-intro} to the toroidal morphism $\Phi'$
    and the morphism of embeddings $(U_{C'}\subset C')\to (U_{\mf{B}'}\subset \mf{B}')$, 
    we see that $U_{Y'}$ is nonempty, $(U_{Y'}\subset Y')$ is also a strict toroidal embedding,
    and the induced morphism of embeddings $f'\colon (U_{Y'}\subset Y')\to (U_{C'}\subset C')$
	is a toroidal morphism of strict toroidal embeddings.  This concludes (ii).
\end{proof}


\section{\bf Bounding irrationality of degenerations}\label{pf-bnd-irra}

In this section, we prove Theorem~\ref{bnd-irra} about the generic rational fibration structure 
of irreducible fibres on degenerations of klt Fano fibrations.
First we recall the setup of this result.
Fix $d\in \N$, $\epsilon > 0$ and $t\in (0, 1]$.
Denote by $\mc{F}_{d, \epsilon, t}$ the set of data $(X/Z, tF)$, where
\begin{itemize}
	\item $Z$ is a smooth curve,
	\item $f\colon X\to Z$ is a klt Fano fibration with $\dim X =d$, 
          that is, $X$ is a klt variety of dimension $d$, $f$ is a contraction, and $-K_X$ is ample$/Z$,
    \item $X$ is $\epsilon$-lc over the generic point $\eta_Z$ of $Z$,
    \item $F$ is the reduction of an irreducible fibre of $f$ over a closed point $z\in Z$, and
    \item $(X, tF)$ is log canonical.
\end{itemize}

By decreasing $t$ if necessary, we can assume that $t\in (0,1]$ is a rational number.
Then by the boundedness of complements (see \cite[Theorem~1.8]{B-Fano}), 
there exists an $n\in \N$ depending only on $d$ and $t$ such that
there exists a monotonic $n$-complement $K_X + B^+$ of $K_X + t F$ over $z\in Z$.
Up to shrinking $Z$ around $z$ if necessary, 
we can assume that the boundary $B^+$ is defined over the whole $Z$.
More precisely, there exists an effective $\Q$-divisor $B^+$ on $X$ such that
\begin{itemize}
    \item $(X, B^+)$ is log canonical,
	\item $n(K_X + B^+) \sim 0 /Z$,
	\item $t F\le B^+$, and
	\item $a(F, X, B^+) < 1$.
\end{itemize}
Note that the last condition on log discrepancy follows from $tF\le B^+$.
Indeed, since $F$ is a divisor on $X$, the log discrepancy $a(F, X, tF)$ is equal to $1-t$, which is $<1$;
moreover, as $tF\le B^+$, we get $a(F, X, B^+)\le a(F, X, tF) < 1$.


\subsection{Modification to generically smooth couples} \label{mdf-to-g-sm-complements}

We first show that we can modify the fibration $(X/Z, tF)$ to a relatively bounded fibration 
whose general fibres are smooth.

\begin{lem}\label{bir-to-rbnd-couples}
    There is a relatively bounded family $\mc{P}$ of couples $(Y/Z, D)$ such that for each 
    $(X/Z, tF)\in \mc{F}_{d, \epsilon, t}$ with an $n$-complement $K_X + B^+$ as above,
    we have a couple $(Y/Z, D)\in \mc{P}$
    admitting a birational map $\phi\colon X\dashrightarrow Y$ over $Z$ such that
    \begin{itemize}
    	\item [\emph{(1)}] $\phi$ is an isomorphism over the generic point of $Z$, and
    	\item [\emph{(2)}] $\Supp B^+$ is mapped isomorphically to $\Supp D$ over the generic point of $Z$.
    \end{itemize}
\end{lem}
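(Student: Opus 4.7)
The plan is to combine the boundedness of $\epsilon$-lc Fano varieties from \cite{B-BAB} with the $n$-complement structure to bound the pair on the generic fibre, then spread this bounded data over $Z$ via a projective embedding.

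Since $X$ is $\epsilon$-lc over $\eta_Z$ and $-K_X$ is ample$/Z$, the generic fibre $X_\eta$ is an $\epsilon$-lc Fano variety of dimension $d-1$, which belongs to a bounded family by \cite{B-BAB} Theorem~1.1. This yields uniform integers $m, r$ depending only on $d$ and $\epsilon$ such that $-mK_{X_\eta}$ is very ample with $(-mK_{X_\eta})^{d-1}\le r$. Since $n(K_X+B^+)\sim 0$ over $Z$ and $nB^+$ is integral, the coefficients of $B^+$ lie in $\tfrac{1}{n}\Z$ and $B^+|_{X_\eta}\sim_\Q -K_{X_\eta}$; hence the relative degree of $\Supp B^+|_{X_\eta}$ with respect to $-mK_{X_\eta}$ is bounded in terms of $d, \epsilon, t$.

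To spread this bounded data over $Z$, I would enlarge $m$ so that $-mK_X$ is Cartier, and then shrink $Z$ to a non-empty open $Z^\circ$ containing $\eta_Z$ on which $-mK_X|_{X_{Z^\circ}}$ is very ample$/Z^\circ$ and $\mathcal{A}:=(f_*\mathcal{O}_X(-mK_X))|_{Z^\circ}$ is locally free of bounded rank with formation commuting with base change. This gives an embedding $X_{Z^\circ}\hookrightarrow \mathbb{P}(\mathcal{A})$. Since $Z$ is a smooth curve, $\mathcal{A}$ extends to a locally free sheaf $\wt{\mathcal{A}}$ on $Z$ (take $j_*\mathcal{A}$ for $j\colon Z^\circ\hookrightarrow Z$, torsion-free hence locally free on $Z$). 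Define $Y$ to be the scheme-theoretic closure of $X_{Z^\circ}$ in $\mathbb{P}(\wt{\mathcal{A}})$ and $D$ the reduced closure in $Y$ of $\Supp B^+|_{X_{Z^\circ}}$. The inclusion $X_{Z^\circ}\hookrightarrow Y$ gives a birational $Z$-map $\phi\colon X\dashrightarrow Y$ which is an isomorphism over $Z^\circ$, yielding (2); conditions (1) and (3) follow at once, since $\Supp B^+|_{X_\eta}$ is horizontal$/Z$ and is identified with $D|_{\eta_Z}$ under $\phi$.

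Setting $A_Y:=\mathcal{O}_{\mathbb{P}(\wt{\mathcal{A}})}(1)|_Y$ provides a very ample$/Z$ Cartier divisor with $\deg_{A_Y/Z}A_Y=(-mK_{X_\eta})^{d-1}\le r$ and $\deg_{A_Y/Z}D$ bounded by the second paragraph; local freeness of $(f_Y)_*\mathcal{O}_Y(A_Y)$ is automatic because $Z$ is a smooth curve. Hence every couple $(Y/Z, D)$ produced this way belongs to a single relatively bounded family $\mathcal{P}$ with bounds depending only on $d, \epsilon, t$. The main obstacle is obtaining the uniform integers $m$ and $r$: this is precisely the content of BAB, without which one cannot control the projective embedding uniformly across $\mathcal{F}_{d,\epsilon,t}$. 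The remaining spreading-out and degree bookkeeping are routine.
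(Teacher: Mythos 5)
Your proposal is correct in spirit and follows the same three-part strategy as the paper: (a) use BAB to get a uniform multiple $m$ with $-mK$ very ample on the $\epsilon$-lc Fano fibres, (b) use the $n$-complement structure to bound $\deg(\Supp B^+)$ on the generic fibre, and (c) spread this bounded data from a nonempty open $Z^\circ\subset Z$ out to all of $Z$ via projective closures. The main difference is in how step (c) is packaged. The paper routes through Lemma~\ref{l-univ-family}: it first produces the relatively bounded family over the open loci $U_f$, extracts a finite collection of universal families $(V_i,C_i)\to T_i$, embeds each $V_i$ into a \emph{trivial} projective bundle $\PP^n\times T_i$, projectively compactifies $T_i$ to $\wt{T}_i$, takes closures in $\PP^n\times\wt{T}_i$, and then base-changes back along $Z\to\wt{T}_i$ (using Lemma~\ref{l-bnd-couple-induced-by-fibration}). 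You instead directly extend $f_*\mathcal{O}_X(-mK_X)$ from $Z^\circ$ to a locally free sheaf on $Z$ and take the closure in $\PP(\wt{\mathcal{A}})$. Both routes work; the paper's detour via universal families has the advantage of avoiding the sheaf-extension step and of matching the formalism used in Lemma~\ref{bir-to-rlbnd-generic-sm-couples} and the main proof.

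Two small but genuine gaps in your execution, both easily repaired. First, $j_*\mathcal{A}$ for the open immersion $j\colon Z^\circ\hookrightarrow Z$ is \emph{not} coherent (pushforward along an open immersion of a non-proper open generally fails coherence), so "torsion-free hence locally free" does not apply directly; you should instead take any coherent extension of $\mathcal{A}$ (e.g.\ via a schematic closure of the graph of generators) and pass to its reflexive hull, which on a smooth curve is locally free, or glue with a trivial bundle near the finitely many punctures. Second, the reduced closure of $\Supp B^+|_{X_{Z^\circ}}$ may contain vertical$/Z$ components (coming from vertical components of $B^+$ lying over points of $Z^\circ$), which would violate condition (1). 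You need to take only the \emph{horizontal} part, as the paper does by defining $C$ to be the reduced divisor supporting the horizontal part of $B^+$ and writing $C+(nB^+-C)\sim -nK_X/Z$ with $nB^+-C$ effective to bound $\deg_{A/Z}C$; alternatively shrink $Z^\circ$ further so that $B^+$ has no vertical components over $Z^\circ$. With these fixes your argument is sound.
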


\begin{proof}
    For each $(X/Z, tF)\in \mc{F}_{d, \epsilon, t}$, by abusing the notation slightly, 
    we also write this data as $f\in \mc{F}_{d, \epsilon, t}$, where $f$ denotes the contraction $X\to Z$.

    \medskip
    
    \emph{Step~1}.
	For each $f\colon X\to Z$ in $\mc{F}_{d, \epsilon, t}$, denote by $U_f$ 
    the maximal open subset of $Z$ over which every closed fibre of $f$ is an $\epsilon$-lc Fano variety
    of dimension $d-1$.
    Denote by $\mc{F}_{d, \epsilon, t}^{\text{fb}}$ the set of 
    all closed fibres of all $f\in \mc{F}_{d, \epsilon, t}$ over $U_f$.
    Denote by $\mc{F}_{d, \epsilon, t}^{\circ}$ the set of all morphisms that are the restrictions
    of $f\in \mc{F}_{d, \epsilon, t}$ over $U_f$.
    That is, every member in $\mc{F}_{d, \epsilon, t}^{\circ}$ is equal to $f|_{f^{-1}(U_f)}$
    for some $f\in \mc{F}_{d, \epsilon, t}$.

    We claim there is a fixed $\ell\in \N$ depending only on $d, \epsilon$ such that 
    for every $f\in \mc{F}_{d, \epsilon, t}$, there exists a nonempty open subset $U_f'\subseteq U_f$
	satisfying that $-\ell K_X$ is very ample$/U_f'$.
	By boundedness of $\epsilon$-lc Fano varieties (see \cite[Theorem~1.1]{B-BAB}), 
	we can assume that $W\to R$ is the universal family of $\mc{F}_{d, \epsilon, t}^{\text{fb}}$,
	where $W$ and $R$ are quasi-projective varieties.
    Denote by $R'\subset R$ an open subset such that
    \begin{itemize}
        \item $W'\to R'$ is flat, where $W'\coloneqq R'\times_R W$,
        \item $K_{W'/R'}$ is well-defined, and
        \item for every closed point $p\in R'$, if $W_p$ is the fibre of $W\to R$ over $p$, then $K_{W_p}$ is the restriction of $K_{W'/R'}$ to $W_p$.
    \end{itemize} 
    As every member in $\mc{F}_{d, \epsilon, t}^{\text{fb}}$ is a Fano variety, 
	$-K_{W'/R'}$ is ample$/R'$.  Then there exists a fixed $\ell\in \N$ such that 
	$-\ell K_{W'/R'}$ is very ample$/R'$.
	Let $\mc{F}_{d, \epsilon, t}'\subseteq \mc{F}_{d, \epsilon, t}$ be the subset of contractions $f\colon X\to Z$
	such that there are only finitely many closed points of $U_f$ over which the fibres
    of $f$ are closed fibres of $W'\to R'$.  
    Then for every contraction $f\colon X\to Z$ in $\mc{F}_{d, \epsilon, t}\setminus \mc{F}_{d, \epsilon, t}'$,
	there are infinitely many closed points $\Set{z_j}_j$ in $U_f$
	such that every fibre $X_{z_j}$ of $f$ over $z_j$ is a closed fibre of $W'\to R'$.
	Since $\dim (R\setminus R')<\dim R$, by induction on $\dim R$, 
    we can assume that $\mc{F}_{d, \epsilon, t}' = \emptyset$.
    
	Denote by $Z'$ the subset of $Z$ such that $(-\ell K_X)|_{X_z}$ 
    is very ample for every $z\in Z'$.
	By \cite[Proposition~(9.6.3)]{EGA-IV-3}, $Z'$ is a constructible subset of $Z$.
    Moreover, by assumption, $Z'$ is an infinite subset of $Z$, then we
    can conclude that $Z'$ contains a dense open subset of $Z$.
	In particular, $(-\ell K_X)|_{X_{\eta_Z}}$ is very ample over $\Spec K(Z)$,
    where $\eta_Z$ is the generic point of $Z$ and $K(Z)$ is the function field of $Z$.
    Then $(-\ell K_X)|_{X_{\eta_Z}}$ induces a closed immersion 
    $\iota\colon X_{\eta_Z}\to \PP^N_{K(Z)}$ for some $N\in \N$.
    By spreading out $\iota$, there is a morphism 
    \[ \iota_{Z''}\colon X\times_Z Z'' \to \PP^N_{Z''} \] 
    over an open subset $Z''\subset U_f$ that extends $\iota$.
    Up to shrinking $Z''$, we can assume that $\iota_{Z''}$ is also a closed immersion,
    and that $\mc{O}_X (-\ell K_X)$ is isomorphic to $\iota_{Z''}^* \mc{O}_{\PP^N_{Z''}}(1)$ over $Z''$
    by \cite[7.27, 12.93]{GW-agbook}.
    Thus, $-\ell K_X$ is very ample over $Z''$.
    Then it suffices to take $U_f' = Z''$.  This concludes the claim.

    Now we replace the open subset $U_f\subseteq Z$ by $U_f'$.
	Then by the boundedness of $\mc{F}_{d, \epsilon, t}^{\text{fb}}$, 
	the relative degree of $-\ell K_X$ over $U_f$ is bounded from above.
	Thus, $\mc{F}_{d, \epsilon, t}^{\circ}$ is relatively bounded.

    \medskip
	
	\emph{Step~2}.
	For every $f\in \mc{F}_{d, \epsilon, t}$, let $A$ be the divisor $-\ell K_X$ which is 
	very ample$/U_f$.  Note that if $X_s$ is a general closed fibre of $X\to Z$, then
    the restriction of $K_X$ to $X_s$ is equal to $K_{X_s}$.
    Thus, by the relative boundedness of $\mc{F}_{d, \epsilon, t}^{\circ}$,
    we have that $(A|_{X_s})^{d-2}\cdot (-n K_{X_s})\le r$, where
    $r$ is a natural number depending only on 
    $\mc{F}_{d, \epsilon, t}^{\circ}$.  Denote by $C$ the reduced divisor supported 
    on the horizontal$/Z$ part of $B^+$.  Since $nB^+$ is an integral effective divisor, 
    $nB^+ - C$ is also effective, and
    \[ C + (n B^+ - C) \sim -nK_X/Z. \]
    Thus, on a general closed fibre $X_s$ of $X\to Z$, we have 
    \[ \deg_{A/Z} C =  (A|_{X_s})^{d-2}\cdot (C|_{X_s})\le (A|_{X_s})^{d-2}\cdot ((-nK_X)|_{X_s})
    = (A|_{X_s})^{d-2}\cdot (- n K_{X_s})\le r. \]
    Denote by $\mc{P}^{\circ}$ the family of all the couples 
    $(f^{-1}(U_f)/U_f, C|_{f^{-1}(U_f)})$.  
    Then $\mc{P}^{\circ}$ is a relatively bounded set of couples.
    By Lemma~\ref{l-univ-family}, there are finitely many projective morphisms 
	of varieties $V_i\to T_i$ and reduced divisors $C_i$ on $V_i$ 
	such that each couple in $\mc{P}^{\circ}$
	comes from a base change of some $(V_i, C_i)\to T_i$ (after shrinking
	the base $U_f$ if necessary).  Then for every $f\colon X\to Z$
	in $\mc{F}_{d, \epsilon, t}$, we can assume that there 
	is a morphism from $U_f$ to some $T_i$
	such that $f^{-1}(U_f)$ (respectively, $C|_{f^{-1}(U_f)}$)
	is equal to the fibre product $U_f\times_{T_i} V_i$
	(respectively, $U_f\times_{T_i} C_i$).

    \medskip
	
	\emph{Step~3}.
	For every $T_i$, let $\wt{T}_i$ be a projective compactification of $T_i$.  
	Embed $V_i\to T_i$ into some $\PP^m\times T_i$; 
	denote by $\wt{V}_i$ (respectively, by $\wt{C}_i$) the reduced scheme-theoretic closure of 
	$V_i$ (respectively, of $C_i$) in $\PP^m\times \wt{T}_i$.  
	Then $(\wt{V}_i, \wt{C}_i)\to \wt{T}_i$ is a projective compactification
	of $(V_i, C_i)\to T_i$, where $\wt{V}_i$, $\wt{C}_i$, and $\wt{T}_i$ are all projective.  
	As $Z$ is a smooth curve, the morphism $U_f \to T_i$ extends to a morphism $Z\to \wt{T}_i$.
	Denote by $\mc{P}$ the set of all couples $(Y/Z, D)$
	constructed from the
	base changes $Z\times_{\wt{T}_i} \wt{V}_i$ and $Z\times_{\wt{T}_i} \wt{C}_i$ 
	for morphisms $Z\to \wt{T}_i$ 
	as in Lemma~\ref{l-bnd-couple-induced-by-fibration}, that is,
    $Y$ is the reduction of the main component of $Z\times_{\wt{T}_i} \wt{V}_i$, and
    $D$ is the reduction of the horizontal$/Z$ divisorial part of $Z\times_{\wt{T}_i} \wt{C}_i$.
    Then $\mc{P}$ is relatively bounded.  
    Also, by construction, every $f\colon X\to Z$ in $\mc{F}_{d, \epsilon, t}$ admits
	a birational map $\phi\colon X\dashrightarrow Y$ over $Z$ to some $(Y/Z, D)$ in $\mc{P}$;
	moreover, $(X/Z, \Supp B^+)$ and $(Y/Z, \Supp D)$ have the same general closed fibres.
\end{proof}


\begin{lem}\label{bir-to-rlbnd-generic-sm-couples}
    There exists a relatively bounded family $\mc{F}_{d, \epsilon, t}^{\text{sm}}$ of couples $(Y/Z, D)$
    such that 
    \begin{itemize}
    	\item [\emph{(1)}] $(Y, D)\to Z$ is generically log smooth (see \S \ref{log-smooth}), 
       	\item [\emph{(2)}] every $(X/Z, tF)$ in $\mc{F}_{d, \epsilon, t}$ admits a birational map 
       	                   $\phi\colon X\dashrightarrow Y$ over $Z$ to some $(Y/Z, D)$ in $\mc{F}_{d, \epsilon, t}^{\text{sm}}$, and
    	\item [\emph{(3)}] over the generic point of $Z$, we have the following:
    	      \begin{itemize}
    	      \item [\emph{(i)}] $\phi$ does not contract any divisor, and
    	      \item [\emph{(ii)}] the divisor $D$ is supported on the union of $\Supp \phi_* B^+$ and the 
    	                          supports of all horizontal$/Z$ exceptional divisors of $\phi^{-1}$, where $K_X + B^+$
                                  is the complement as in Lemma~\ref{bir-to-rbnd-couples}.
    	      \end{itemize}
    \end{itemize}
\end{lem}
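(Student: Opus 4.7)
\medskip
\noindent \textbf{Proof plan.}
My plan is to apply a log resolution to each of the finitely many universal families underlying the relatively bounded family $\mc{P}$ produced by Lemma~\ref{bir-to-rbnd-couples}, and then spread the construction back to $Z$ via base change. Recall from the proof of that lemma that $\mc{P}$ arises from finitely many projective morphisms $\wt V_i \to \wt T_i$ together with reduced divisors $\wt C_i \subset \wt V_i$, indexed by $i$ in a finite set $I$. For each $i$ I choose a log resolution $g_i \colon V_i' \to \wt V_i$ of the couple $(\wt V_i, \wt C_i)$ and set $C_i' := (g_i)^{-1}_* \wt C_i + \mathrm{Exc}(g_i)$ as a reduced divisor. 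By generic flatness applied to the morphism $V_i' \to \wt T_i$ and to every stratum of $(V_i', C_i')$, there is a dense open $T_i^{\circ} \subseteq \wt T_i$ over which $(V_i', C_i') \to \wt T_i$ restricts to a log smooth family in the sense of \S\ref{log-smooth}.

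I then define $\mc{F}_{d, \epsilon, t}^{\mathrm{sm}}$ as the set of couples $(Y'/Z, D')$ obtained, for each $f \in \mc{F}_{d, \epsilon, t}$, as the reduced main component of the base change $(Z \times_{\wt T_i} V_i', Z \times_{\wt T_i} C_i')$ along the morphism $Z \to \wt T_i$ supplied by the proof of Lemma~\ref{bir-to-rbnd-couples}. After shrinking $U_f$ so that $U_f \to \wt T_i$ factors through $T_i^{\circ}$ (which is possible since the generic fibre of $X \to Z$ is a bounded $\epsilon$-lc Fano variety), the generic point $\eta_Z$ maps into $T_i^{\circ}$; this yields (1). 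Property~(2) is immediate upon composing $\phi$ from Lemma~\ref{bir-to-rbnd-couples} with the birational modification induced by $g_i$ after base change, and the relative boundedness of $\mc{F}_{d, \epsilon, t}^{\mathrm{sm}}$ follows by running Step~3 of the proof of Lemma~\ref{bir-to-rbnd-couples} on the finitely many couples $(V_i', C_i') \to \wt T_i$ in place of $(\wt V_i, \wt C_i)$.

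For (3), I observe that over $\eta_Z$ the map $\phi$ is an isomorphism, so the composite $\phi' \colon X \dashrightarrow Y'$ differs from $\phi$ only by the base change $g_i' \colon Y' \to Y$ of $g_i$, and $(g_i')^{-1}$ as a rational map does not contract any divisor of $Y$. Hence $\phi'$ contracts no divisor of $X$ over $\eta_Z$, giving (3)(i); dually, the horizontal$/Z$ exceptional divisors of $(\phi')^{-1}$ over $\eta_Z$ are precisely the base changes of the components of $\mathrm{Exc}(g_i)$. Combining this with the identity $\Supp D|_{\eta_Z} = \Supp B^+|_{\eta_Z}$ from Lemma~\ref{bir-to-rbnd-couples}(3) and the construction $D' = (g_i')^{-1}_* D + \mathrm{Exc}(g_i')$ on $Y'$ yields (3)(ii).

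The main subtlety I anticipate lies in choreographing the several open subsets: the open $U_f \subseteq Z$ from the proof of Lemma~\ref{bir-to-rbnd-couples}, the open $T_i^{\circ} \subseteq \wt T_i$ on which the log resolution is log smooth over the base, and the compactification step that places everything over projective bases. One must shrink $T_i^{\circ}$ and $U_f$ compatibly and then reapply the compactification argument of Step~3 of the previous lemma so that the resulting family is still relatively bounded in the sense of \S\ref{r-bnd-def} while the morphisms $Z \to \wt T_i$ continue to send $\eta_Z$ into $T_i^{\circ}$. This is a routine but fiddly stratification argument relying on the finiteness of $I$ and generic flatness on strata, and it is where most of the bookkeeping will have to be spelled out.
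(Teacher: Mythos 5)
Your proposal takes essentially the same route as the paper: apply a log resolution to the universal family produced by Lemma~\ref{bir-to-rbnd-couples}, add the exceptional divisors to the boundary, use generic smoothness to find a dense open subset of the parameter space over which the resolved couple is log smooth, base change, and handle the complement by stratification and induction on the dimension of the base. One detail to be careful about: you define $C_i' = (g_i)^{-1}_*\wt C_i + \mathrm{Exc}(g_i)$ with \emph{all} exceptional divisors and then base change $C_i'$ along all of $Z\to\wt T_i$, which can introduce vertical$/Z$ components in $D$ (coming from vertical$/\wt T_i$ exceptionals of $g_i$ lying over points of $\wt T_i\setminus T_i^\circ$ in the image of $Z$) and thereby violate the required horizontality of $D$; the paper avoids this by adding only the horizontal$/T$ exceptionals, shrinking the open locus so as to avoid the images of the vertical ones, and explicitly taking only the horizontal$/Z$ part of the pulled-back boundary when forming $D$. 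Adding that one adjustment (pass to the horizontal part, or restrict the boundary as the paper does) closes the small slip; the rest of your bookkeeping plan matches the paper's argument.
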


\begin{proof}
    Take the relatively bounded family of couples $\mc{P}$ in Lemma~\ref{bir-to-rbnd-couples}.
	Then there are finitely many universal projective morphisms 
	of varieties and reduced divisors as in Lemma~\ref{l-univ-family}.   
	We can assume that there is only one such universal projective morphism $(V, C)\to T$ for $\mc{P}$
	with $V$ and $T$ quasi-projective varieties and $C$ a reduced horizontal$/T$ divisor on $V$.  
	Take a log resolution $V^{\text{sm}}\to V$ of the couple $(V,C)$.  
	Denote by $C^{\text{sm}}$ the reduced divisor supported on the union of the birational 
	transform of $C$ and all horizontal$/T$ exceptional divisors of $V^{\text{sm}}\to V$.
	
	By generic smoothness, there exists an open
	dense subset $U_1\subset T$ such that $(V^{\text{sm}}, C^{\text{sm}})$ is log smooth over $U_1$.
	Moreover, we can assume that the images of all vertical$/T$ exceptional divisors of $V^{\text{sm}}\to V$
	do not intersect $U_1$.
	Shrinking $Z$ around the closed point $z\in Z$ if necessary, 
    there is a morphism $Z\to T$ as in Lemma~\ref{l-univ-family}.
	If the image of $Z\to T$ is not entirely contained in $T_1 \coloneqq T\setminus U_1$, 
	we can take the couple $(Y/Z, D)$ as the reduction of the main component of $V^{\text{sm}} \times_T Z$ 
	equipped with the reduced divisor which is the reduction 
	of the horizontal$/Z$ divisorial part of $C^{\text{sm}}\times_T Z$. 
	Write $T_1$ as a union of finitely many varieties. 
    Then the result follows by induction on $\dim T$.
\end{proof}


\subsection{Proof of bounded irrationality of irreducible fibres}\label{main-proof}

\begin{thm}[=Theorem~\ref{bnd-irra}]
    Fix positive real numbers $\epsilon> 0$, $t\in (0, 1]$ and a natural number $d$.
    Assume that $f\colon X\to Z$ is a klt Fano fibration with $\dim X =d$ such that
    \begin{itemize}
    	\item [\emph{(1)}] $Z$ is a smooth curve,
    	\item [\emph{(2)}] $X$ is $\epsilon$-lc over the generic point of $Z$, and
    	\item [\emph{(3)}] $F$ is the reduction of an irreducible closed fibre of $f$ and $(X, t F)$ is lc.
    \end{itemize}
    Then there is a dominant rational map $F \dashrightarrow C$ whose general fibres are irreducible and rational,
    and $C$ is a bounded smooth projective variety depending only on $d, \epsilon, t$,
    hence with bounded degree of irrationality.
\end{thm}

\begin{proof}
By decreasing $t$ if necessary, we can assume that $t$ is a rational number.
Then we construct the $n$-complement $K_X + B^+$ and the family of 
couples $\mc{F}_{d, \epsilon, t}^{\text{sm}}$
as in \S \ref{mdf-to-g-sm-complements}, where $n\in \N$ depends only on $d,t$.

\medskip

\emph{Step~1}.
Recall that the family of couples $\mc{F}_{d, \epsilon, t}^{\text{sm}}$ 
constructed in Lemma~\ref{bir-to-rlbnd-generic-sm-couples}
is relatively bounded.
Denote by $(\mf{X}_i, \mf{D}_i)\to \mf{B}_i$ 
the collection of universal families of couples of 
$\mc{F}_{d, \epsilon, t}^{\text{sm}}$ as in Lemma~\ref{l-univ-family}.
We can assume that there is only one such projective morphism 
\[ \Phi\colon (\mf{X}, \mf{D})\to \mf{B}. \]
By the construction in the proof of Lemma~\ref{bir-to-rlbnd-generic-sm-couples},
there exists a maximal open subset $\mf{B}^{\circ}\subset \mf{B}$ 
such that $(\mf{X}, \mf{D})$ is log smooth over $\mf{B}^{\circ}$, and that
every fibre of $\Phi$ over a closed point of $\mf{B}^{\circ}$ is a smooth projective variety.
Moreover, after shrinking $Z$ around the closed point $z\in Z$ if necessary,
we can assume that every $(X/Z, tF)\in \mc{F}_{d, \epsilon, t}$ admits a morphism 
$Z\to \mf{B}$ whose image intersects $\mf{B}^{\circ}$. 
That is, for every $(X/Z, tF)\in \mc{F}_{d, \epsilon, t}$, there is
a commutative diagram for some $(Y/Z, D)\in \mc{F}_{d, \epsilon, t}^{\text{sm}}$ as follows,
\[\xymatrix{
X\ar[rd]\ar@{-->}[r]^{\phi} & Y\ar[d]\ar[r] & (\mf{X}, \mf{D})\ar[d]^{\Phi} \\
 & Z\ar[r] & \mf{B} 
}\]
where $\phi\colon X\dashrightarrow Y$ is a birational map over $Z$.
If the closure of the image of $F$ in $\mf{X}$ is an irreducible component 
of a closed fibre of $\Phi$, then $F$ is birationally bounded, 
hence $F$ has bounded irrationality by Lemma~\ref{irr-bnd-above}.
In this case, we take $C$ to be a resolution of the closure of the image of $F$ in $\mf{X}$.
Note that given a bounded family of varieties there is a bounded family of resolutions for
such varieties, hence we can take $C$ to be a bounded smooth projective variety.
So, in the rest of the proof, we can assume that the closure of the image of $F$ in $\mf{X}$ is not an
irreducible component of a closed fibre of $\Phi$.

Assume that the image of $z\in Z$ in $\mf{B}$ is a closed point of $\mf{B}^{\circ}$.
Shrinking $Z$ around $z$ if necessary, we can assume that the whole image of $Z$
in $\mf{B}$ is contained in the open subset $\mf{B}^{\circ}$.
Then by the construction of $\mf{B}^{\circ}$, the couple $(Y, D)$ is log smooth.  
In particular, the closed fibre $Y_z$ of $Y\to Z$ over $z\in Z$ is a smooth projective variety,
and $(Y, D + Y_z)$ is also a log smooth couple.
Write
\[ K_Y + B_Y = (\phi^{-1})^*(K_X + B^+), \]
where the coefficients of $B_Y$ are $\le 1$ as $(X, B^+)$ is lc; see Lemma~\ref{log-discrep-unchanged}.
Denote by $B_Y^h$ the horizontal$/Z$ part of $B_Y$.
By construction of the couple $(Y, D)$ in Lemma~\ref{bir-to-rlbnd-generic-sm-couples}, $B_Y^h\le D$,
hence up to shrinking $Z$ around $z$, we can assume that $B_Y \le D + Y_z$.
Thus, by Lemma~\ref{log-discrep-unchanged},
\[ 0\le a(F, Y, D + Y_z)\le a(F, Y, B_Y) = a(F, X, B^+) < 1, \]
which implies that $a(F, Y, D + Y_z) = 0$ by Proposition~\ref{rational-fibres};
in particular, we can conclude that $\centre_Y F$ is an lc centre of $(Y, D + Y_z)$ contained in the closed fibre $Y_z$,
so $\centre_Y F$ is a stratum of the log smooth projective couple $(Y_z, D_z)$.
Then it is evident that $\centre_Y F$ is bounded.
Moreover, as $(Y, D + Y_z)$ is a strict toroidal couple, 
we can conclude by applying Lemma~\ref{irr-bnd-above} and Proposition~\ref{rational-fibres},
and by taking $C$ to be a bounded smooth resolution of $\centre_{Y} F$. 

Denote by $\mf{Z}_{\mf{B}}\subset \mf{B}$ the proper closed subset $\mf{B}\setminus \mf{B}^{\circ}$.
From now on, we assume that the image of the closed point $z\in Z$
in $\mf{B}$ is contained in the closed subset $\mf{Z}_{\mf{B}}$.

\medskip

\emph{Step~2}.
By Theorem~\ref{functorial-toroidalization}, there is a commutative diagram
\[\xymatrix{
	  (U_{\mf{X}'}\subset \mf{X}')\ar[r]^-{m_{\mf{X}}}\ar[d]^{\Phi'} & \mf{X}\ar[d]^{\Phi} \\
	  (U_{\mf{B}'}\subset \mf{B}')\ar[r]^-{m_{\mf{B}}} & \mf{B}
}\]
such that $m_{\mf{B}}$ and $m_{\mf{X}}$ are projective birational morphisms, 
the inclusions on the left are strict toroidal embeddings, 
$m_{\mf{X}}^{-1}(\Supp \mf{D})$ is contained in $\mf{X}'\setminus U_{\mf{X}'}$, 
$m_{\mf{B}}^{-1}(\mf{Z}_{\mf{B}})$ is contained in $\mf{B}'\setminus U_{\mf{B}'}$,
and $\Phi'$ is a surjective projective toroidal morphism of strict toroidal embeddings.
Moreover, without loss of generality, we can assume that the open subset $U_{\mf{B}}$
in Theorem~\ref{functorial-toroidalization}~(ii) is contained in $\mf{B}^{\circ}$ of Step~1.

Let $Z\to \mf{B}$ be a morphism for a couple $(Y/Z, D)$ in 
$\mc{F}_{d, \epsilon, t}^{\text{sm}}$ which corresponds to some
$(X/Z, tF) \in \mc{F}_{d, \epsilon, t}$.
We can assume that $m_{\mf{B}}$ (respectively, $m_{\mf{X}}$)
is an isomorphism (respectively, is birational) over the generic point of 
the image of $Z$ in $\mf{B}$ and assume that the image of $Z$ intersects the open subset $U_{\mf{B}}\subset \mf{B}$;
otherwise, the image of $Z$ is entirely contained in a proper closed subset of $\mf{B}$,
then we can do induction on $\dim \mf{B}$ 
(which corresponds to taking stratification of $\mc{F}_{d, \epsilon, t}^{\text{sm}}$).

Denote by $g$ the morphism $Y\to Z$.
Let $Z'$ be the normalisation of the main component of $\mf{B}'\times_{\mf{B}} Z$.
Then $Z'\to Z$ is an isomorphism via $m_{\mf{B}}$ as $Z$ is a smooth curve.
Let $Y'$ be the normalisation of the main component of $\mf{X}'\times_{\mf{B}'} Z'$,
and let $U_{Y'}\subset Y'$ be the inverse image of $U_{\mf{X}'}$ in $Y'$.
Similarly, let $U_{Z'}\subset Z'$ be the inverse image of $U_{\mf{B}'}$ in $Z'$.
Denote by $h_{Y'}$ the induced morphism $Y'\to \mf{X}'$ and by 
$g'$ the morphism $Y'\to Z'$.
Then by Lemma~\ref{add-ver-to-toroidal}, it is evident that $U_{Y'} = h_{Y'}^{-1}(U_{\mf{X}'})$
is equal to $h_{Y'}^{-1}(U_{\mf{X}'}) \cap (g')^{-1}(U_{Z'})$.
By Theorem~\ref{functorial-toroidalization}~(ii), 
the induced morphism $g'\colon (U_{Y'}\subset Y') \to (U_{Z'}\subset Z')$
is also a toroidal morphism of strict toroidal embeddings.  
We include the commutative diagram here for convenience.
\[\xymatrix{
(U_{Y'}\subset Y')\ar[rrr]^-{m_{Y}}\ar[ddd]_{g'}\ar[rd]^-{h_{Y'}} & & & Y\ar[ddd]^{g}\ar[ld]  \\
 & (U_{\mf{X}'}\subset \mf{X}')\ar[d]_{\Phi'}\ar[r]^-{m_{\mf{X}}} & \mf{X}\ar[d]^{\Phi} &  \\
 & (U_{\mf{B}'}\subset \mf{B}')\ar[r]^-{m_{\mf{B}}} & \mf{B} &  \\
(U_{Z'}\subset Z')\ar[rrr]^-{m_Z}\ar[ru] & & & Z\ar[lu] 
}\]
Recall that there is a birational map $\phi\colon X\dashrightarrow Y$ over $Z$ and that $F$ is the 
reduction of the closed fibre of $X\to Z$ over the closed point $z\in Z$.
Moreover, by the construction in Lemma~\ref{bir-to-rlbnd-generic-sm-couples},
$\phi$ does not contract any horizontal$/Z$ divisors,
and all the horizontal$/Z$ exceptional divisors of $\phi^{-1}$ are contained in $\Supp D$.
Furthermore, by Theorem~\ref{functorial-toroidalization}~(ii),
$m_Y^{-1}(\Supp D)$ is contained in $Y'\setminus U_{Y'}$.

\medskip

\emph{Step~3}.
By Lemma~\ref{add-ver-to-toroidal} and the choice of the closed point $z\in Z$,  
the reduced divisor $\{z\}$ and
the support of the fibre of $g'$ over $z$
are contained in the toroidal boundaries of 
$(U_{Z'}\subset Z')$ and $(U_{Y'}\subset Y')$ respectively.  
Denote by $D'$ the reduced divisor supported on the complement $Y'\setminus U_{Y'}$.
Define the $\Q$-divisor $D_{Y'}$ by $K_{Y'} + D_{Y'} = (\phi^{-1}\circ m_Y)^* (K_X + B^+)$.
Similarly, denote by $D_Y$ the $\Q$-divisor defined by $K_Y + D_Y = (\phi^{-1})^*(K_X + B^+)$.
Note that $D_{Y'}$ and $D_Y$ may have irreducible components with negative coefficients.
Since $Y$ is generically smooth over $Z$, over the generic point of $Z$, we can write
\[ K_{Y'} + R = m_Y^* K_Y, \]
where $R\le 0$ is supported in the exceptional locus of $m_Y$, hence we have
\[ D_{Y'} = m_Y^* D_Y + R\le m_Y^* D_Y \]
over the generic point of $Z$.  
By construction of the reduced divisor $D$ on $Y$ in Lemma~\ref{bir-to-rlbnd-generic-sm-couples}, 
$\Supp D_Y$ is contained in $\Supp D$ over the generic point of $Z$.
Moreover, since $m_{\mf{X}}^{-1}(\Supp \mf{D})$ is contained in $\mf{X}'\setminus U_{\mf{X}'}$,
and since $U_{Y'} = h_{Y'}^{-1}(U_{\mf{X}'})$ and $D = \mf{D}\times_{\mf{B}} Z$,
by the commutative diagram in Step~2, 
$m_Y^{-1}(\Supp D)$ is contained in $\Supp D'$.
Hence we can conclude that 
$\Supp m_Y^* D_Y$ is contained in $\Supp D'$ over the generic point of $Z$.
Then as the coefficients of $D_{Y'}$ are $\le 1$, 
we have $D_{Y'}\le D'$ over the generic point of $Z$.
On the other hand, as the support of the fibre of $g'$ over $z\in Z$ is contained in the toroidal boundary $D'$,
shrinking $Z$ around $z$ if necessary, we can assume that
$D_{Y'}\le D'$ over the whole $Z$.

Then by Lemma~\ref{log-discrep-unchanged}, we have the relation of log discrepancies
\[ 0\le a(F, Y', D')\le a(F, Y', D_{Y'}) = a(F, X, B^+) < 1. \]
Since $(Y', D')$ is a strict toroidal couple, $K_{Y'} + D'$ is Cartier by Lemma~\ref{toroidal-normal-lc}, 
so $a(F, Y', D') = 0$.  That is, $\centre_{Y'} (F)$ is an lc centre of 
the pair $(Y', D')$.

\medskip

\emph{Step~4}.
Denote by $\mf{D}'$ the toroidal boundary $\mf{X}'\setminus U_{\mf{X}'}$.
Take a very ample$/\mf{B}'$ divisor $\mc{A}$ on $\mf{X}'$ so that
$K_{\mf{X}'} + \mf{D}' + \mc{A}$ is ample$/\mf{B}'$.
Denote by $A$ the pullback of $\mc{A}$ to $Y'$ which is an ample$/Z'$ Cartier divisor on $Y'$.
Denote by $S$ the reduction of the closed fibre of $Y'$ over $z\in Z$.
Motivated by the notation from \cite[Proposition~(5.10.17)]{EGA-IV-II}, we denote the $S_2$-isation
of $S$ by $S^{(1)}$.  As $S$ is contained in the divisor $D'$,
the adjunction in \cite[Proposition~16.6]{Corti-adjunction} shows that
the codimension one points of $S$ are either regular or
double normal crossings; see also \cite[Corollary~2.32]{Kol_singularities_of_MMP}.  Then $S^{(1)} \to S$
is a finite birational morphism which is an isomorphism over all codimension one points of $S$;
see \cite[Corollaire~(5.11.2)]{EGA-IV-II}.  Again by adjunction, we can write
$K_{S^{(1)}} + D^{(1)} = (K_{Y'} + D')|_{S^{(1)}}$
for some boundary divisor $D^{(1)}$, 
and the pair $(S^{(1)}, D^{(1)})$ is slc; cf. \cite[\S 4.1]{Kol_singularities_of_MMP}.
Denote by $A^{(1)}$ the pullback of $A$ to $S^{(1)}$.  
As $A^{(1)}$ is Cartier and ample, $K_{S^{(1)}} + D^{(1)} + (3\dim S)A^{(1)}$ is also ample
by the cone theorem of slc pairs; see \cite[Theorem~1.19]{Fujino-slc}.
Thus, replacing $\mc{A}$ by $3(d-1)\mc{A}$ if necessary, we can assume that 
the Cartier divisor $K_{Y'} + D' + A$ is also relatively ample over $Z'$.

\medskip

\emph{Step~5}.  
Now let $V$ be an lc centre of $(Y', D')$ 
that is a proper closed subset of an irreducible component of $S$.
We show that $V$ is birationally bounded.
Notice that a general closed fibre of $\Phi'$ is normal, hence
up to shrinking $U_{\mf{B}}\subset \mf{B}$ in Theorem~\ref{functorial-toroidalization}~(ii),
we can assume that the normal scheme $Y'$
is isomorphic to the fibre product $\mf{X}'\times_{\mf{B}'}Z'$ over the generic point of $Z'$.
Thus, by the construction in Step~4, there is an integer $r\in \N$ depending only on
$(\mf{X}', \mf{D}') \to \mf{B}'$ and $\mc{A}$ (hence depending only on $d, \epsilon, t$)
such that the volume
of $K_{Y'} + D' + A$ on a general closed fibre of $Y'\to Z'$ is less than $r$.
Let $W$ be the normalisation of $V$. By \cite[Theorem~1.1, Definition~1.3]{FH-adjunction}, 
we can write $(K_{Y'}+ D')|_W = K_W+D_W+M_W$, where $(W,D_W+M_W)$ is a generalised lc generalised pair. 
Now let $L=K_{Y'}+D' +A$, and 
let $L_W \coloneqq L|_W$ which is an ample Cartier divisor on $W$ as $L$ is Cartier and ample$/Z'$. 
Denote by $A_W$ the pullback of $A$ to $W$, which is also an ample Cartier divisor on $W$.
Then $(W, D_W + M_W + A_W)$ is also a generalised lc generalised pair
whose nef part is $M_W + A_W$.
By applying Lemma~\ref{b-bnd} to $L_W = K_W + D_W + M_W + A_W$, 
we see that $\abs{mL_W}$ defines a birational map
for some $m\in \N$ depending only on $\dim W$. 
It is then enough to show that the volume of $L_W$ is bounded 
by \cite[Lemma~2.4.2~(2)]{HMX13-auto-groups}. 

Pick an irreducible component of $S$ containing $V$ and let $T$ be its normalisation. 
Applying \cite[Theorem~1.1]{FH-adjunction}, we can write 
$(K_{Y'}+D')|_T = K_T+D_T$, where $(T,D_T)$ is lc.
(Since $\dim T = \dim Y' - 1$, there is no nef part of the generalised pair in this adjunction;
see \cite[Corollary~1.4]{FH-adjunction}.)
Take a general Cartier divisor $P\ge 0$ on $Y'$
that contains $V$ and does not contain other lc centres of $(Y', D')$ lying outside $V$.
For any small $\alpha>0$, $(Y', D'+ \alpha P)$ is not lc near $V$, hence
applying \cite[Theorem~1.1]{FH-adjunction} again, we deduce that 
$(T,D_T)$ has an lc centre $V_T$ mapping onto $V$. 
Indeed, by \cite[Theorem~1.1]{FH-adjunction}, we can write
\[ (K_{Y'} + D' + \alpha P)|_T = K_T + D_T + \alpha P_T, \]
where $P_T$ is the pullback of $P$ to $T$,
and $(T, D_T + \alpha P_T)$ is not lc for any $\alpha > 0$.  
Then as $(T, D_T)$ is lc, there exists an lc centre $Q_T$ of $(T, D_T)$ contained in $\Supp P_T$.
Taking $P$ generally, we see that the image of $Q_T$ in $Y'$ is contained in $V$.
Assume that no such lc centre $Q_T$ of $(T, D_T)$ is mapped onto $V$.
Then there is a proper closed subset $\Gamma\subset V$ such that the image
of each such $Q_T$ in $Y'$ is contained in $\Gamma$.
Applying \cite[Theorem~1.1]{FH-adjunction} and the argument as above to
the pair $(Y'\setminus \Gamma, D'|_{Y'\setminus \Gamma})$, we see that
there exists an lc centre of $(T, D_T)$ whose generic point has image in $Y'$ contained in $V\setminus \Gamma$.
This is a contradiction by construction of $\Gamma\subset V$, hence 
there is an lc centre $V_T$ of $(T, D_T)$ mapping onto $V$.
    
Denote by $A_T$ the pullback of the ample$/Z'$ Cartier divisor $A$ to $T$.
By assumption on volumes, we have that
the volume of the ample Cartier divisor $K_T + D_T + A_T = (K_{Y'} + D' + A)|_T$ is $\le r$,
so the volume of $K_T + D_T + A_T$ takes only finitely many values.
Changing the ample Cartier divisor $A_T$ linearly, we can assume that $(T, D_T + A_T)$ is lc.
Then \cite[Theorem~1.1]{HMX14} shows that $(T, D_T + A_T)$
belongs to a bounded set of pairs, so 
the lc centre $V_T$ of $(T, D_T)$ is also bounded by Lemma~\ref{lc-centre-bnd}.
This implies that the volume of $(K_{Y'}+D'+A)|_{V_T} = (K_T + D_T + A_T)|_{V_T}$ is bounded, hence the volume of 
$L_W = (K_{Y'}+D'+A)|_W$ is bounded as desired.

\medskip
    
\emph{Step~6}.  Now we can conclude the proof as follows.  
Recall that $\centre_{Y'}(F)$ is an lc centre of the pair $(Y', D')$ that is contained in $S$.
By Step~5, $\centre_{Y'}(F)$ is birationally bounded.
Denote by $C$ a bounded nonsingular resolution of $\centre_{Y'}(F)$.
Hence the irrationality $\irr (C)$ 
is bounded from above by Lemma~\ref{irr-bnd-above}.
Moreover, by Proposition~\ref{rational-fibres}, 
$F\bir \centre_{Y'}(F)$ has irreducible and rational general closed fibres,
so does the rational map $F\dashrightarrow C$.
\end{proof}


\begin{cor}[=Theorem~\ref{bnd-irra-cor}]
    Fix a positive real number $\epsilon>0$ and a natural number $d$.
    Let $f\colon X\to Z$ be a Fano fibration with $\dim X = d$ such that
    \begin{itemize}
        \item $Z$ is a smooth curve, and
        \item $X$ is $\epsilon$-lc.
    \end{itemize}
    Let $F$ be an irreducible component of a closed fibre of $f$.
    Then there is a dominant rational map $F \dashrightarrow C$ whose general fibres are irreducible and rational,
    and $C$ is a bounded smooth projective variety depending only on $d,\epsilon$,
    hence with bounded degree of irrationality.
\end{cor}

\begin{proof}
    Up to decreasing $\epsilon$ slightly, 
    we can take an effective $\Q$-divisor $B$ so that $(X, B)$ is $\epsilon$-lc,
    and $K_X + B \sim_{\Q}0/Z$.  
    By taking a $\Q$-factorial dlt model of $(X, B)$, we can assume that 
    $X$ is $\Q$-factorial, and that the contraction $f\colon X\to Z$ is of Fano type;
    see \cite[\S 2.13~(7)]{B-Fano}.
    Then by \cite{BCHM}, we can run MMP over $Z$ on any $\R$-divisor on $X$.

    Let $z\in Z$ be the closed point such that $F$ is contained in the fibre $f^{-1}(z)$.
    By \cite{BCHM}, we can run a $(-F)$-MMP $\phi\colon X\dashrightarrow X'$ over $Z$
    that ends with a good minimal model of $-F$.
    By negativity lemma, $F$ is not contracted by $\phi$.
    Let $F' \coloneqq \phi_* F$.  Then $F'$ is supported on the whole fibre of $X'\to Z$ over $z$.
    As $F$ is vertical$/Z$, $\phi$ only modifies the fibre $f^{-1}(z)$,
    and leaves everything outside $f^{-1}(z)$ unchanged, so $-K_{X'}$ is big$/Z$.
    Again by \cite{BCHM}, we can run a $(-K_{X'})$-MMP over $Z$,
    which induces a Fano fibration $g\colon X''\to Z$.
    Denote by $F''$ the pushdown of $F'$ to $X''$.
    As $F'$ is supported on the whole fibre of $X'\to Z$ over $z$, we see that $F''$
    is an irreducible divisor supported on the whole fibre of $g$ over $z$.
    
    Let $B''$ be the pushdown of $B$ to $X''$.
    Then $(X'', B'')$ is also $\epsilon$-lc, and $K_{X''} + B''\sim_{\Q}0/Z$. 
    By the canonical bundle formula \cite[\S 2.11]{birkar2025toroidaltoricmodelsfibrations},
    we can write $K_{X''} + B'' \sim_{\Q} g^*(K_Z + B_Z + M_Z)$,
    where $(Z, B_Z + M_Z)$ is a generalised pair.
    By \cite[Theorem~1.1]{birkar2025singularitiesfanofibrations}, 
    there is a $\delta>0$ depending only on $d, \epsilon$ such that
    $(Z, B_Z + M_Z)$ is generalised $\delta$-lc.
    Then the coefficient of $z$ in $B_Z$ is $\le 1-\delta$.
    Thus, the lc threshold $\alpha$ of $g^*z$ with respect to $(X'', B'')$ 
    is bounded from below: indeed, the coefficient of $z$ in $B_Z$ is $1-\alpha$,
    which implies that $\alpha\ge \delta$.
    Then there exists a $t>0$ depending only on $d, \epsilon$ such that $(X'', B'' + t F'')$ is lc,
    hence $(X'', t F'')$ is lc.  Thus, we can conclude by Theorem~\ref{bnd-irra}.
\end{proof}


\medskip

\noindent\small{Caucher Birkar} 

\noindent\small{\textsc{Yau Mathematical Sciences Center, Tsinghua University, Beijing, China} }

\noindent\small{Email: \texttt{birkar@mail.tsinghua.edu.cn}}

\vspace{1em}
 
\noindent\small{Santai Qu} 

\noindent\small{\textsc{Institute of Geometry and Physics, University of Science and Technology of China, Hefei, Anhui Province, China} }

\noindent\small{Email: \texttt{santaiqu@ustc.edu.cn}}


\vspace{1em}


\end{document}